\theoremstyle{plain}
\newtheorem{theorem}{Theorem}[section]
\newtheorem{lemma}[theorem]{Lemma}
\newtheorem{corollary}[theorem]{Corollary}
\newtheorem{proposition}[theorem]{Proposition}
\theoremstyle{remark}
\newtheorem{remark}[theorem]{Remark}
\def\R{{\mathbf R}}
\def\N{{\mathbf N}}
\def\O{\mathcal O}
\newcommand{\SR}{\mathcal{S}(\mathbf{R})}
\def\I{\mathcal{I}}
\def\F{\mathcal{F}}
\def\({\left(}
\def\){\right)}
\def\<{\left\langle}
\def\>{\right\rangle}
\def\le{\leqslant}
\def\ge{\geqslant}
\def\d{{\partial}}
\def\eps{\varepsilon}
\DeclareMathOperator{\RE}{Re}
\numberwithin{equation}{section}
\begin{document}

\title[Splitting  for  the Fowler equation]{Splitting methods for
 the nonlocal Fowler equation} 

\author[A. Bouharguane]{Afaf Bouharguane}
\address{Laboratoire Jean Kuntzmann, Univ. Joseph Fourier \\
38041 Grenoble Cedex 9, France.}
\email{afaf.bouharguane@imag.fr}

\author[R. Carles]{R\'emi Carles}
\address{CNRS \& Univ. Montpellier~2 \\Math\'ematiques
\\CC~051\\34095 Montpellier\\France}
\email{Remi.Carles@math.cnrs.fr}
 
\begin{abstract}
We consider a nonlocal scalar conservation law proposed by Andrew
C. Fowler to describe the dynamics of dunes, and we develop a
numerical procedure based on splitting methods to approximate its
solutions.  We begin by proving the convergence of the well-known Lie
formula, which is an approximation of the exact solution of order one in
time. We next use  
the split-step Fourier method to approximate the continuous problem
using the fast Fourier transform and the finite difference method. 
Our numerical experiments confirm the theoretical results. 
\end{abstract}
\thanks{2010 \emph{Mathematics Subject Classification.} {Primary
    65M15; Secondary 35K59, 86A05.} }
\keywords{Nonlocal operator, numerical time integration,
operator splitting, split-step Fourier method, stability, error
analysis.} 
\thanks{This work was supported by the French ANR project
  MATHOCEAN, ANR-08-BLAN-0301-02.} 
\maketitle

\section{Introduction}

We consider the Fowler equation \cite{Fo-p,Fo01}:
\begin{equation}
\left\{
    \begin{aligned}
        &\partial_t u(t,x) + \partial_x\left(\frac{u^2}{2}\right) (t,x)
        + \I [u(t,\cdot)] (x)  
- \partial_{x}^2 u(t,x) = 0,\quad  x \in \R, t>0, \\
        &u(0,x) = u_0(x),\quad  x \in \R,
    \end{aligned}
\right.
\label{fowlereqn5}
\end{equation} 
where $u=u(t,x)$ represents the dune height and $\I$
is a nonlocal operator defined as follows: for any Schwartz
function $\varphi \in \SR$ and any $x \in \R$,
\begin{equation}
\I [\varphi] (x) := \int_{0}^{+\infty} |\xi|^{-\frac{1}{3}}
\varphi''(x-\xi) \, d\xi . \label{nonlocalterm4}
\end{equation}
We refer to \cite{AAI10,AA09,Bo12} for theoretical results on this
equation.

\begin{remark}
The nonlocal term $\I$ is anti-diffusive. Indeed, it has been proved
in \cite{AAI10} that   
\begin{equation}\label{fourier}
\F\left( \I[\varphi]\right)(\xi) = - 4 \pi^2
\Gamma\left(\frac{2}{3}\right) \left(\frac{1}{2}-i \, \mbox{sgn}(\xi)
  \frac{\sqrt{3}}{2}\right)|\xi|^{4/3},
\end{equation}
where $\F$ denotes
the Fourier transform normalized in \eqref{TF}. Thus, 
$\I$ can been seen as a fractional power of order $2/3$ of the
Laplacian, with the ``bad'' sign. It will be clear from the
analysis below that our results can easily be extended to the case
where $\I$ is replaced with a Fourier multiplier homogeneous of degree
$\lambda\in ]0,2[$, as in \cite{ABC12}, and not only $\lambda=4/3$. 
\end{remark}

We assume that the initial data $u_0$ belongs to $H^3(\R)$, and thus
\eqref{fowlereqn5} has a unique solution belonging to $C([0,t],
H^3(\R))$ for all $t>0$, from \cite{AAI10}.  
We will denote $u(t, \cdot)$ by $S^t u_0$; $S^t$ maps $ H^3(\R)$ to
itself. Duhamel's formula
for the continuous problem \eqref{fowlereqn5} reads
\begin{equation}
u(t, \cdot) := S^t u_0= K(t,\cdot) \ast u_0 - \frac{1}{2} \int_0^t \partial_x K(t-s, \cdot) \ast \left( S^s u_0\right)^2 \, ds, 
\label{duhamel5}
\end{equation}
where $K(t, \cdot) = \F^{-1}\left( e^{-t \psi_\I}\right) $ is the kernel of the operator $\I-\partial_{x}^2$, and $\psi_\I$ is defined by
\begin{equation}
\psi_\I(\xi) = 4 \pi^2 \xi^2- a_\I |\xi|^{4/3} + i b_\I \xi |\xi|^{1/3},
\end{equation}
where $a_\I,b_\I$ are positive constants. 
\smallbreak

Recently, to solve the Fowler equation some numerical experiments have
been performed using mainly finite difference approximation schemes
\cite{AB-p,ABC12}. However, these schemes are not effective because if
we opt for an explicit scheme, numerical stability requires that the
time step $\Delta t$ is limited by $O(\Delta x^2).$  And, if we choose
an implicit scheme, we have to solve a large system which is a
computationally expensive operation. Thus, the splitting method
becomes an interesting alternative to solve the Fowler model.  
To our knowledge, there is no convergence result in the literature for
the splitting method associated to the Fowler equation.  
This method is more commonly used to split different physical terms,
such as reaction and diffusion terms, see for instance
\cite{RS09}. Splitting methods have also been employed for solving a
wide range of nonlinear wave equations. The basic idea of this method
is to decompose the original problem into sub-problems and then to
approximate the solution of the original problem by solving
successively the
sub-problems. Various versions of this method have
been developed for the nonlinear Schr\"{o}dinger, Korteweg-de-Vries
and modified Korteweg-de-Vries equations, see for instance
\cite{HKLR10,Lu08,Sa07,TA84}. \\ 
For the Fowler model \eqref{fowlereqn5}, we consider, separately, the
linear Cauchy problem 
\begin{equation}
\frac{\partial v}{\partial t} +  \I[v(t, \cdot)] - \eta
\, \partial_{x}^2 v = 0; \quad v(0,x) = v_0(x), 
\label{nlocal}
\end{equation} 
and 
the nonlinear Cauchy problem 
\begin{equation}
\frac{\partial w}{\partial t} + \partial_x \left( \frac{w^2}{2} \right) - \varepsilon \, \partial_{x}^2 w = 0;\quad
        w(0,x) = w_0(x),
\label{burgers}
\end{equation} 
where $ \varepsilon, \eta$ are fixed \emph{positive} parameters such that
$\varepsilon + \eta = 1$. 
Equation \eqref{burgers} is simply the viscous Burgers' equation. We
denote by $X^t$ and $Y^t$, respectively, the evolution operator
associated with \eqref{nlocal} and \eqref{burgers}: 
\begin{equation*}
v(t, \cdot) := X^t v_0 = D(t, \cdot)\ast v_0,
\end{equation*}
where $ D(t, \cdot) = \F^{-1}\left( e^{-t \, \phi_\I}\right) $ with
$\phi_\I(\xi) = 4\pi^2 \eta \xi^2 - a_\I |\xi|^{4/3} + b_\I \xi
|\xi|^{1/3}$, and
\begin{equation}
w(t, \cdot) := Y^t w_0 = G(t, \cdot) \ast w_0 - \frac{1}{2} \int_0^{t} \partial_x G(t-s, \cdot) \ast \left(Y^s w_0 \right)^2 \, ds, 
\label{burgerduhamel}
\end{equation}
where $G$ is the heat kernel defined by
\begin{equation*}
G(t, \cdot) = \F^{-1}(e^{-t(4 \pi^2 \varepsilon |.|^2) }) = \frac{1}{\sqrt{4\pi \varepsilon t}} e^{-\frac{|.|^2}{4\varepsilon t}}.
\end{equation*}
Furthermore, the following $L^2$-estimate holds
\begin{equation}
\|Y^t w\|_{L^{2}(\R)} \le \|w\|_{L^{2}(\R)}.
\label{estimapriori1}
\end{equation}
Let us explain the choice of this decomposition. First, we can remark
that if we do not consider the nonlinear term in \eqref{fowlereqn5},
the analytical solutions are available using the Fourier
transform. Thus, the linear part may be computed efficiently using a
fast Fourier transform (hereafter FFT) algorithms. Note also that the
Laplacian and the fractional term $\I$ cannot be treated
separately. Indeed, the equation $u_t + \I[u] = 0$ is ill-posed. We
next decide to handle the nonlinear term by adding a bit of
viscosity in order to avoid shock problems in the standard Burgers'
equation. Therefore, the splitting approach presented in this article
differs from e.g. the one analyzed in \cite{Fa09}, which corresponds to
assuming $\eps=0$ in the above definitions. The splitting operators
associated to this approach when $\I=0$ (which amount to considering
alternatively the heat equation and the Burgers equation) have been
studied in \cite{HLR-p} (as well as other equations involving the
Burgers nonlinearity, such as the KdV equation, see also
\cite{HKRT11}). The main difference with the result
presented in \cite{HLR-p} is that the operator $\I$ is not a
differential operator, so its action on nonlinear terms is rather
involved
,
while this point would be needed to compute the Lie commutator between
the two operators
\begin{equation*}
  A(v) = \eta \d_x^2 v - \I[v],\quad B(v) = -v\d_x v +\eps^2\d_x^2 v,
\end{equation*}
as in e.g. \cite{HLR-p,Lu08}. Note that since we consider the Lie
splitting operator, we could in principle use 
 the exact formula established in
\cite{DeTh-p} for the local error. But then again, we face the problem
to compute $[A,B]$, which is the only term that we cannot estimate in
\cite[Theorem~1]{DeTh-p}. We finally point out that we use smoothing
effects associated to the 
viscous Burgers equation; see Corollaries~\ref{cor:condest} and
\ref{cor:split1}.  

We  motivate this choice by the presence of artificial diffusion in classical numerical schemes used to solve the convection equations.
An alternative to reduce this effect is to consider numerical schemes
of high order which are usually  computationally expensive and do not
seem to be very useful for the Fowler model because of the diffusion
term.  
\smallbreak

We consider the Lie formula defined by
\begin{equation}\label{eq:Lie}
Z^t_L = X^t Y^t. 
\end{equation}
The alternative definition $Z^t_L =Y^t X^t$ could be studied as well,
leading to a similar result. 
Also, the
following evolution operators 
\begin{equation*}
Z^t_S = X^{t/2 } Y^t X^{t/2 } \quad \text{or}  \quad Z^t_S = Y^{t/2} X^t Y^{t/2},
\end{equation*}
 corresponding to  the Strang method \cite{St68} could be considered. Following
 the computations detailed in the present paper for the case
 \eqref{eq:Lie}, it would be possible 
 to show that the other Lie formula generates a scheme of order one,
 and to prove that 
 the Strang method is of order two (for smooth initial data), in the
 same fashion as in, e.g., 
 \cite{BBD02,Lu08}. This fact is simply illustrated 
 numerically in Section~\ref{numerique}, to avoid a lengthy
 presentation. With $Z_L^t$ given by \eqref{eq:Lie}, our main result is:
\begin{theorem}\label{theoreme1}
For all $u_0 \in H^3(\R)$  and for all $T>0$, there exist positive
constants $c_1,c_2$ and $\Delta t_0$ such that for all $ \Delta t \in
]0,\Delta t_0]$ and for all $n \in \N$ such that $ 0 \le n
\Delta t \le T$, 
\begin{equation*}
\| (Z_L^{\Delta t})^{n } u_0 - S^{n \Delta t} u_0 \|_{L^{2}(\R)} \le
c_1 \,  \Delta t \quad \text{and}\quad \|(Z_L^{\Delta t})^{n }
u_0\|_{H^3(\R)}\le c_2.
\end{equation*}
Here,
$c_1,c_2$  and $\Delta t_0$ depend only
on
$T$, $\rho=\max_{t\in [0,T]} \|S^t u_0\|_{H^2(\R)}$, and $\|u_0\|_{H^3(\R)}$. 
\end{theorem}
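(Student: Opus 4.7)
The strategy is the classical telescoping identity (Lady Windermere's fan)
\begin{equation*}
(Z_L^{\Delta t})^n u_0 - S^{n\Delta t}u_0 \;=\; \sum_{k=0}^{n-1}(Z_L^{\Delta t})^{n-k-1}\bigl[(Z_L^{\Delta t} - S^{\Delta t})\,S^{k\Delta t}u_0\bigr],
\end{equation*}
which reduces the global $L^2$ error to (i) a local estimate $\|S^{\Delta t} v - Z_L^{\Delta t} v\|_{L^2(\R)}\le C\Delta t^2$ on smooth enough data and (ii) a one-step $L^2$ Lipschitz bound for $Z_L^{\Delta t}$. The two conclusions of the theorem must then be closed simultaneously by a bootstrap: the local bound is applied on the exact trajectory (of $H^2$-norm $\le\rho$), while the Lipschitz stability is applied on the numerical trajectory, whose $H^3$-norm has to be kept a priori bounded along the iteration.

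The preliminary linear estimates are direct. Since $\RE\phi_\I(\xi)=4\pi^2\eta\xi^2-a_\I|\xi|^{4/3}$ is bounded below, the Fourier multiplier $X^t$ satisfies $\|X^t\|_{H^s(\R)\to H^s(\R)}\le e^{C_s t}$ for every $s\ge 0$, and the semigroup $Y^t$ obeys \eqref{estimapriori1} together with the higher-order and smoothing estimates of Corollaries~\ref{cor:condest}--\ref{cor:split1}. For the local error I would use \eqref{duhamel5} and \eqref{burgerduhamel} and exploit the semigroup identity $D(\Delta t)=D(s)\ast D(\Delta t-s)$ together with the factorisation $K(t)=D(t)\ast G(t)$ to rewrite the defect as
\begin{equation*}
S^{\Delta t} v - Z_L^{\Delta t} v \;=\; -\tfrac{1}{2}\int_0^{\Delta t}\partial_x K(\Delta t - s)\ast \bigl[(S^s v)^2 - D(s)\ast (Y^s v)^2\bigr]\,ds.
\end{equation*}
The bracket vanishes at $s=0$, and differentiating in $s$ using the equations for $S^s v$, $Y^s v$ and $D(s)$ produces an $L^2$ bound on the integrand of size $Cs$, with $C$ polynomial in $\|v\|_{H^2(\R)}$; no loss of derivatives occurs, because $\partial_x K(\Delta t-s)$ is absorbed by the convolution. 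Integrating over $[0,\Delta t]$ gives the desired $O(\Delta t^2)$.

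Plugging this local bound and the $L^2$-Lipschitz estimate for $Z_L^{\Delta t}$ (which follows from $\|X^t\|_{L^2\to L^2}\le e^{Ct}$ and the standard Lipschitz property of the viscous Burgers flow on bounded sets of $L^\infty(\R)$) into the telescoping identity produces the announced $O(\Delta t)$ global error, provided all intermediate iterates remain in a fixed $H^3$-ball. Propagating this $H^3$-ball along the scheme is the main obstacle, since --- as stressed in the introduction --- the non-differential nature of $\I$ forbids the usual commutator-based analysis. I would handle it by an induction on $k$: a one-step $H^3$ energy estimate of the schematic form $\|Z_L^{\Delta t}w\|_{H^3(\R)}\le (1+C(\|w\|_{H^1(\R)})\Delta t)\|w\|_{H^3(\R)}$, combining the linear bound for $X^{\Delta t}$ with a parabolic energy argument for $Y^{\Delta t}$, closes by Gronwall as soon as the $H^1$-norm of $(Z_L^{\Delta t})^k u_0$ is controlled --- which is granted by Sobolev interpolation between the already-proved $L^2$ closeness to $S^{k\Delta t} u_0$ and the induction hypothesis on $H^3$. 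Choosing $\Delta t_0$ small enough for this bootstrap to survive one step then yields both estimates, with constants depending only on $T$, $\rho$ and $\|u_0\|_{H^3(\R)}$.
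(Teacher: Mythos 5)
Your architecture is the mirror image of the paper's: you telescope with the numerical flow applied to defects created along the exact trajectory, whereas the paper writes $u_n^k=S^{(n-k)\Delta t}u_k$ and propagates the defects (created at the numerical iterates) with the exact flow, whose $L^2$-Lipschitz property on $H^2$-balls is Proposition~\ref{prop:lipschitz}. Both versions of Lady Windermere's fan work here, and your defect identity $S^tv-Z_L^tv=-\tfrac12\int_0^t\partial_xK(t-s)\ast\bigl[(S^sv)^2-D(s)\ast(Y^sv)^2\bigr]ds$ is correct and is essentially \eqref{difference00} in disguise. However, two steps as you describe them do not close. First, in the local error: if you keep the derivative on the kernel you only have $\|\partial_xK(t-s)\|_{L^1}\lesssim(t-s)^{-1/2}$, and combined with an $O(s)$ bound on the bracket in $L^2$ this yields $\int_0^t(t-s)^{-1/2}s\,ds\sim t^{3/2}$, i.e.\ a local error $O(\Delta t^{3/2})$ and a global error $O(\Delta t^{1/2})$ only. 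To reach $O(\Delta t^2)$ the derivative must land on the quadratic bracket, whose $s$-derivative then contains terms like $\partial_x(S^sv\,\partial_x^2S^sv)$; the constant is therefore controlled by $\|v\|_{H^3(\R)}^2$, not by a polynomial in $\|v\|_{H^2(\R)}$ as you claim. This is repairable (the exact trajectory stays bounded in $H^3$ by Lemma~\ref{lemme8}), but it is not the estimate you wrote.

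Second, and more seriously, your $H^3$ propagation is circular. You propose a one-step bound $\|Z_L^{\Delta t}w\|_{H^3}\le(1+C(\|w\|_{H^1})\Delta t)\|w\|_{H^3}$ and control $\|u_k\|_{H^1}$ by interpolating $L^2$ against the $H^3$ bound $C_0$ you are in the process of proving. The induction then closes only if $C_0\ge e^{C(aC_0^{1/3})T}\|u_0\|_{H^3}$ admits a solution, which is not guaranteed when $C(\cdot)$ grows: the left side is logarithmic in $C_0$ after taking logs while the right side grows polynomially in $C_0^{1/3}$. The paper avoids this entirely: the parabolic smoothing of the viscous Burgers step gives $\int_0^t\|\partial_xY^sw\|_{L^\infty}ds\le C(T,\|w\|_{L^2})$ (Corollary~\ref{cor:condest}, built on \eqref{eq:L2H2} and the Kato--Ponce estimate \eqref{eq:KatoPonce}), so the one-step $H^3$ growth factor of $Z_L^{\Delta t}$ depends only on the $L^2$ norm of the data --- which is trivially propagated since $\|Z_L^tu\|_{L^2}\le e^{\beta_0t}\|u\|_{L^2}$. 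With that replacement (and with the $L^2$-Lipschitz bound for the iterated numerical flow obtained via Duhamel, the heat-kernel smoothing and the fractional Gronwall Lemma~\ref{lemme1}, rather than an energy argument "on bounded sets of $L^\infty$", which would actually require $W^{1,\infty}$ control), your scheme of proof becomes a valid alternative to Section~\ref{sec:proof}.
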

\begin{remark}
  It will follow from Lemma~\ref{lemme8} that
  \begin{equation*} 
  \rho=\max_{t\in [0,T]} \|S^t u_0\|_{H^2(\R)}\le
  C_T(\|u_0\|_{H^1(\R)})\|u_0\|_{H^2(\R)},
  \end{equation*}
for some nonlinear (increasing) function $C_T$ depending on $T$. 
\end{remark}
\smallbreak

In this paper, we begin by estimating the $L^2$-stability for error
propagation.  
We next prove that the local error of the Lie formula is an
approximation of order two in time.  
Finally we prove that this evolution operator represents a good
approximation, of order one in time,  of the evolution operator $S^t$
in the  sense of Theorem~\ref{theoreme1}. \\
Furthermore, we apply Lie and Strang approximations in order to make
some numerical simulations using the split-step Fourier experiments.\\ 

This paper is organised as follows. In the next section we give some
properties related to the kernels $G$ and $K$, and we prove two fractional Gronwall Lemmas. In Section~\ref{sectionlemme},  
we establish some estimates on $X^t$,
$Y^t$, $Z_L^t$ and $S^t$. In Section~\ref{errorlocal}, we prove a
local $L^2$ error estimate.  Theorem~\ref{theoreme1} is proved in
Section~\ref{sec:proof}.   
 We finally perform some numerical experiments which show that the Lie
 and Strang methods have a convergence rate in $\O\left(
   \Delta t\right) $ and  $ \O\left(\Delta t^2
 \right))$, respectively. 

\bigskip 

 \noindent \textbf{Notations.}\\
- We denote by $\F$ the Fourier transform of $f$ which is defined by: for all $\xi \in \R$,
\begin{equation}
\label{TF}
\F f (\xi)=\hat f(\xi):= \int_{\R} e^{-2i\pi x \xi} f(x) dx.
\end{equation}
We denote by $\F^{-1}$ its inverse. \\
- We denote by $C_T(c_1,c_2, \cdots)$ a generic constant, strictly
positive, which depends on parameters $c_1,c_2, \cdots$, and $T$. $C$
is assumed to be a monotone increasing function of its arguments. \

\section{Preliminaries} 

\subsection{Properties of the kernels}
\label{sec:kernels}

We begin by recalling the properties of kernels involved in the
present analysis.

\begin{proposition}[Main properties of $K$, \cite{AAI10}] \label{kernel2}
The kernel $K$ satisfies:
\begin{enumerate}
\item $\forall t>0$, $K(t,\cdot) \in L^{1}\left(\R\right)$ and $K \in
  C^{\infty}\left(]0,\infty[\times \R\right)$. 
\item $\forall s,t >0, \, \, K(s,\cdot)\ast K(t,\cdot)=K(s+t,\cdot)$.
\item $ \forall T>0, \exists C_{T} > 0$ such that for all 
$ t \in ]0,T]$,  $\| \partial_{x}K\left(t, \cdot
\right)\|_{L^2\left(\R\right)}\le C_{T} \, t^{-3/4}$.  
\item $ \forall T>0, \exists C_{T} > 0$ such that for all  
$t \in ]0,T]$, $\| \partial_{x}K\left(t,\cdot
\right)\|_{L^1\left(\R\right)}\le C_{T} \, t^{-1/2}$. 
\item For any $u_0 \in L^2(\R)$ and $t > 0$, 
\begin{equation*}
\|K(t,\cdot) \ast u_0\|_{L^2(\R)} \le e^{\alpha_0 t} \|u_0\|_{L^2(\R)},
\end{equation*}
where $\alpha_0 = -\min \RE (\psi_\I) >0.$
\end{enumerate}
\end{proposition}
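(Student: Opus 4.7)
All five properties rest on the Fourier representation $\hat K(t,\xi)=e^{-t\psi_\I(\xi)}$, together with two structural features of $\RE\psi_\I(\xi)=4\pi^2\xi^2-a_\I|\xi|^{4/3}$: it admits a finite lower bound $-\alpha_0$ (comparing the two competing powers), and it is coercive at infinity, i.e.\ there exist $R,c>0$ such that $\RE\psi_\I(\xi)\ge c\xi^2$ for $|\xi|\ge R$.

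The semigroup property (2) is immediate from multiplicativity of the Fourier exponential, and the $L^{2}$ bound (5) is a direct Plancherel computation using $\RE\psi_\I\ge-\alpha_0$. For the smoothness in (1), the Gaussian tail shows that $\xi^{k}\hat K(t,\cdot)\in L^{1}$ for every $k$, hence $K(t,\cdot)\in C^{\infty}$; for the $L^{1}$ membership I would integrate by parts twice in $\xi$ in the inverse Fourier integral to produce pointwise decay $|K(t,x)|\le C(t)(1+x^{2})^{-1}$, combined with the crude bound $\|K(t,\cdot)\|_{L^{\infty}}\le\|\hat K(t,\cdot)\|_{L^{1}}$ on $|x|\le 1$. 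Estimate (3) is again Plancherel: $\|\partial_x K(t)\|_{L^{2}}^{2}=\int 4\pi^{2}\xi^{2}e^{-2t\RE\psi_\I(\xi)}\,d\xi$; splitting at $|\xi|=R$, the inner piece is bounded by $C_T$ uniformly on $[0,T]$, while on $|\xi|\ge R$ one has $e^{-2t\RE\psi_\I(\xi)}\le e^{-2ct\xi^{2}}$ and a direct Gaussian computation yields $O(t^{-3/2})$.

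The main obstacle is (4), because $L^{1}$ bounds do not pass through Fourier. My plan is a parabolic scaling argument: substituting $\xi=\eta/\sqrt{t}$ in the inverse Fourier integral gives
\begin{equation*}
K(t,x)=t^{-1/2}\,\tilde K\bigl(x/\sqrt{t},\,t^{1/3}\bigr),\qquad \partial_x K(t,x)=t^{-1}\,\partial_y\tilde K\bigl(x/\sqrt{t},\,t^{1/3}\bigr),
\end{equation*}
where
\begin{equation*}
\tilde K(y,\tau):=\int_{\R}e^{2\pi i y\eta-4\pi^{2}\eta^{2}+a_\I\tau|\eta|^{4/3}-ib_\I\tau\eta|\eta|^{1/3}}\,d\eta.
\end{equation*}
A change of variable in the $L^{1}$ norm then produces $\|\partial_x K(t)\|_{L^{1}}=t^{-1/2}\|\partial_y\tilde K(\cdot,t^{1/3})\|_{L^{1}}$. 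For $\tau\in[0,T^{1/3}]$ the $\eta$-integrand is uniformly dominated by $e^{-2\pi^{2}\eta^{2}}$ outside a bounded set, so the two-fold integration-by-parts argument of (1) delivers $\partial_y\tilde K(\cdot,\tau)\in L^{1}$ with a norm that depends continuously on $\tau$; compactness of $[0,T^{1/3}]$ yields $\sup_{\tau\in[0,T^{1/3}]}\|\partial_y\tilde K(\cdot,\tau)\|_{L^{1}}\le C_T$, which is exactly the claim. The delicate point I expect is verifying the uniform-in-$\tau$ control of the boundary terms and the integrability of the $\xi$-derivatives near the origin, since $\psi_\I$ is only H\"older-regular there; one gains integrability because the singular powers $|\eta|^{1/3-k}$ that arise from differentiating $|\eta|^{4/3}$ twice remain locally integrable.
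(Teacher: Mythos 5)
The paper does not prove this proposition at all: it is imported verbatim from the reference \cite{AAI10} (and the analogous statement for $G$ from \cite{DI04}), so there is no internal argument to compare against. Your proposal is, as far as I can check, a correct and self-contained proof. Points (2), (3) and (5) are exactly the routine Plancherel/Fourier-multiplier computations one expects, using $\RE\psi_\I(\xi)=4\pi^2\xi^2-a_\I|\xi|^{4/3}\ge -\alpha_0$ and the quadratic coercivity for large $|\xi|$. The two-fold integration by parts for (1) is legitimate because $\psi_\I'$ is continuous at the origin (the singular contributions $|\xi|^{1/3}$ vanish there) and $\psi_\I''\sim|\xi|^{-2/3}$ is locally integrable, so no boundary terms appear and the weight $(1+x^2)^{-1}$ is obtained. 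The parabolic rescaling for (4) is the right idea and is essentially how such $L^1$ kernel bounds are obtained in the fractal conservation law literature that \cite{AAI10} builds on; your exponent bookkeeping ($t\psi_\I(\eta/\sqrt t)=4\pi^2\eta^2-a_\I t^{1/3}|\eta|^{4/3}+ib_\I t^{1/3}\eta|\eta|^{1/3}$, hence the parameter $\tau=t^{1/3}$ and the factor $t^{-1/2}$ after the change of variables in $x$) is correct. Two small suggestions: for the uniform bound on $\|\partial_y\tilde K(\cdot,\tau)\|_{L^1}$ it is cleaner to exhibit directly a $\tau$-uniform envelope $|\partial_y\tilde K(y,\tau)|\le C_T(1+y^2)^{-1}$ (the integrands after integrating by parts are dominated uniformly for $\tau\in[0,T^{1/3}]$ by a fixed integrable function) rather than invoking continuity in $\tau$ plus compactness, since the continuity itself would be proved from that same envelope; and the claim $K\in C^\infty(]0,\infty[\times\R)$ requires also differentiating under the integral in $t$, which brings down polynomially bounded factors $(-\psi_\I)^j$ and is handled by the same Gaussian domination, locally uniformly in $t>0$ --- worth a sentence.
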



\begin{proposition}[Main properties of $G$, \cite{DI04}] 
\label{heat}
The kernel $G$ satisfies:
\begin{enumerate}
\item $G \in C^{\infty}\left(]0,\infty[\times \R\right)$.
\item $\forall s,t >0$, $ G(s,\cdot)\ast G(t,\cdot)=G(s+t,\cdot)$.
\item $\forall t >0$, $\| G\left(t,\cdot
  \right)\|_{L^1\left(\R\right)} =1$.
    \item $\exists C_{0}>0$ such that for all
$ t >0$, $\|\partial_{x}G\left(t, \cdot
\right)\|_{L^2\left(\R\right)}\le C_{0} t^{-3/4}$.
\item  $\exists C_{1}>0$ such that for all
$t >0$, $\| \partial_{x}G\left(t, \cdot
\right)\|_{L^1\left(\R\right)}\le C_{1} t^{-1/2}$. 
\end{enumerate}
\end{proposition}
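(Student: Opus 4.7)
The plan is to exploit the explicit Gaussian form
\[
G(t,x) = \frac{1}{\sqrt{4\pi\varepsilon t}}\, e^{-x^{2}/(4\varepsilon t)},
\]
so that every item reduces essentially to a one-line computation. Item (1) is immediate: on the open set $]0,\infty[\times \R$, the function $G$ is a product and composition of $C^{\infty}$ functions (no singularity of $1/t$ or of the exponent appears), so $G\in C^{\infty}(]0,\infty[\times\R)$.

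For item (2) I would pass to Fourier variables: by definition $\widehat{G(t,\cdot)}(\xi)=e^{-4\pi^{2}\varepsilon t\xi^{2}}$, hence
\[
\F\bigl(G(s,\cdot)\ast G(t,\cdot)\bigr)(\xi) = e^{-4\pi^{2}\varepsilon s\xi^{2}}\cdot e^{-4\pi^{2}\varepsilon t\xi^{2}} = \widehat{G(s+t,\cdot)}(\xi),
\]
and inverting the Fourier transform yields the semigroup identity. Item (3) is the classical Gaussian integral: the change of variable $y=x/\sqrt{4\varepsilon t}$ reduces $\|G(t,\cdot)\|_{L^{1}}$ to $\pi^{-1/2}\int_{\R}e^{-y^{2}}\,dy=1$.

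For items (4) and (5) the decisive tool is parabolic scaling. Setting $g(y)=(4\pi\varepsilon)^{-1/2}e^{-y^{2}/(4\varepsilon)}$, one has $G(t,x)=t^{-1/2}g(x/\sqrt{t})$ and therefore $\partial_{x}G(t,x)=t^{-1}g'(x/\sqrt{t})$. The substitution $y=x/\sqrt{t}$ then gives, for any $p\in[1,\infty]$ for which $g'\in L^{p}(\R)$,
\[
\|\partial_{x}G(t,\cdot)\|_{L^{p}(\R)}^{p} = t^{-p}\int_{\R}\bigl|g'(x/\sqrt{t})\bigr|^{p}\,dx = t^{-p+\tfrac{1}{2}}\,\|g'\|_{L^{p}(\R)}^{p}.
\]
Taking $p=2$ yields $\|\partial_{x}G(t,\cdot)\|_{L^{2}} = \|g'\|_{L^{2}}\, t^{-3/4}$, which is (4); taking $p=1$ yields $\|\partial_{x}G(t,\cdot)\|_{L^{1}} = \|g'\|_{L^{1}}\, t^{-1/2}$, which is (5). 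The finiteness of $\|g'\|_{L^{1}}$ and $\|g'\|_{L^{2}}$ is evident from the explicit expression $g'(y)=-\frac{y}{2\varepsilon}g(y)$, and these numerical constants yield $C_{0}$ and $C_{1}$ (depending on $\varepsilon$).

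No step presents a genuine obstacle: the proposition merely collects textbook facts about the one-dimensional heat semigroup that are used in later error estimates. The only thing requiring attention is bookkeeping the $\varepsilon$-dependence in the constants, and perhaps justifying the swap of integration and differentiation in (1)/(2) by the rapid decay of the Gaussian — both being routine.
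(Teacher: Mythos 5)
Your argument is correct and complete: the explicit Gaussian formula gives (1) and (3) directly, the Fourier characterization $\widehat{G(t,\cdot)}(\xi)=e^{-4\pi^{2}\varepsilon t\xi^{2}}$ gives the semigroup law (2), and the parabolic scaling $G(t,x)=t^{-1/2}g(x/\sqrt{t})$ correctly produces the exponents $t^{-1+1/(2p)}$, i.e.\ $t^{-3/4}$ for $p=2$ and $t^{-1/2}$ for $p=1$. The paper itself offers no proof of this proposition --- it is stated as a recollection of standard heat-kernel facts with a citation to \cite{DI04} --- so your verification is a welcome supplement rather than a deviation; the only bookkeeping point, which you already flag, is that $C_{0}$ and $C_{1}$ depend on the fixed parameter $\varepsilon$, which is harmless for the later estimates.
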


\begin{remark}\label{remarkconvolution}
The kernel $D$ of $ \, \I-\eta \, \partial_{x}^2$ has similar
properties to the kernel $K$. Moreover, for all $t>0$, we have  
\begin{equation}
D(t, \cdot) \ast G(t,\cdot) = K(t, \cdot).
\end{equation}
\end{remark}

\subsection{Fractional Gronwall  lemmas}
\label{sec:gronwall}

\begin{lemma}[Fractional Gronwall Lemma]
\label{lemme1}
Let $\phi:[0,T] \to \R_+$ be a bounded measurable function,
and suppose that there are positive constants $A,L$ and $\theta \in
]0,1[$ such that for all $t \in [0,T]$,
\begin{equation}
\phi(t) \le A + L \frac{d^{-\theta}}{d t^{-\theta}}\phi(t),
\label{ingegality1}
\end{equation}
where $\bigskip \frac{d^{-\theta}}{d t^{-\theta}}$ is the
Riemann--Liouville operator defined by 
\begin{equation*}
\frac{d^{-\theta}}{d t^{-\theta}}\phi(t) = \frac{1}{\Gamma(\theta)}
\int_0^t (t-s)^{\theta-1} \phi(s) \, ds. 
\end{equation*}
Then there exists $C_T(\theta)$ such that 
\begin{equation*}
\phi(t) \le  e^{C_T(\theta)t}A,\quad \forall t\in [0,T].
\end{equation*}
\end{lemma}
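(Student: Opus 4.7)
The plan is to iterate inequality \eqref{ingegality1} and sum an explicit series. Write $J$ for the Riemann--Liouville operator $\frac{d^{-\theta}}{dt^{-\theta}}$, so that the hypothesis reads $\phi \le A + LJ\phi$ pointwise, with $J$ a positivity-preserving linear operator. Substituting this bound for $\phi$ inside the integral defining $J\phi$, and iterating $N$ times, one obtains by induction
\begin{equation*}
\phi(t)\le A\sum_{k=0}^{N-1}L^k\,J^k 1(t) + L^N J^N\phi(t).
\end{equation*}

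The first key step is to identify $J^k 1(t)$. The Riemann--Liouville integrals obey the semigroup property $J^{\alpha}\circ J^{\beta} = J^{\alpha+\beta}$, a consequence of Fubini and the Beta integral $\int_s^t (t-\tau)^{\alpha-1}(\tau-s)^{\beta-1}d\tau = B(\alpha,\beta)(t-s)^{\alpha+\beta-1}$. By induction $J^k = J^{k\theta}$ (in the sense that the operator $J$ of order $\theta$ composed $k$ times equals the operator of order $k\theta$), and applied to the constant $1$ this gives $J^k 1(t) = t^{k\theta}/\Gamma(k\theta+1)$. The remainder is controlled using boundedness of $\phi$: setting $M = \sup_{[0,T]}\phi$,
\begin{equation*}
L^N J^N\phi(t)\le M\,\frac{(Lt^\theta)^N}{\Gamma(N\theta+1)}\longrightarrow 0\quad\text{as }N\to\infty,
\end{equation*}
since $\Gamma(N\theta+1)$ grows superexponentially in $N$. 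Passing to the limit yields
\begin{equation*}
\phi(t)\le A\sum_{k=0}^\infty\frac{(Lt^\theta)^k}{\Gamma(k\theta+1)}=A\,E_\theta(Lt^\theta),
\end{equation*}
where $E_\theta$ is the Mittag-Leffler function of order $\theta$.

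The remaining task is to bound $E_\theta(Lt^\theta)$ by an exponential in $t$. I will invoke the classical estimate $E_\theta(z)\le C(\theta)e^{z^{1/\theta}}$ for $z\ge 0$ and $\theta\in(0,1)$; it is proved directly using Stirling's formula $\Gamma(k\theta+1)\sim\sqrt{2\pi k\theta}\,(k\theta/e)^{k\theta}$, after splitting the defining series at the index where $k\theta\approx z$ to balance the two factors. Taking $z=Lt^\theta$ produces $E_\theta(Lt^\theta)\le C(\theta)\,e^{L^{1/\theta}t}$ on $[0,T]$; absorbing the multiplicative constant $C(\theta)$ into the exponent at a $T$-dependent cost gives the stated bound $\phi(t)\le e^{C_T(\theta)t}A$.

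The main obstacle is this final Mittag-Leffler to exponential comparison. In contrast to the classical ($\theta=1$) Gronwall case, a naive termwise geometric estimate is insufficient because $\Gamma(k\theta+1)$ is much smaller than $k!$ for $\theta<1$; one genuinely needs the Stirling-based splitting to recover the exponential growth in $t$, with the effective rate $L^{1/\theta}$ rather than $L$. The earlier iteration and semigroup steps are routine and largely formal.
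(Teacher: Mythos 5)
Your argument is sound in substance but takes a genuinely different route from the paper's. You iterate the inequality infinitely many times, use the semigroup property of the Riemann--Liouville integrals to compute $J^k 1(t)=t^{k\theta}/\Gamma(k\theta+1)$ exactly, kill the remainder via the boundedness of $\phi$ and the superexponential growth of $\Gamma(N\theta+1)$, and arrive at the Mittag--Leffler bound $\phi(t)\le A\,E_\theta(Lt^\theta)$; this is essentially the proof of the cited reference of Ye, Gao and Ding. The paper instead iterates only \emph{finitely} many times, just until $n\theta\ge 1$ so that the kernel $(t-s)^{n\theta-1}$ becomes bounded, and then invokes the classical Gronwall lemma. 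The trade-off is clear: your route produces the sharp comparison function but hinges on the nontrivial asymptotic $E_\theta(z)\le C(\theta)e^{z^{1/\theta}}$, which you rightly single out as the real work (the Stirling-based series splitting, or a H\"older argument with the weight $2^{-k}$, does carry it through); the paper's route is entirely elementary, needs no special-function asymptotics, and settles for unspecified constants, which is all that is used downstream.

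One caveat on your last step: the multiplicative constant $C(\theta)$ cannot be ``absorbed into the exponent at a $T$-dependent cost,'' because $C(\theta)e^{L^{1/\theta}t}\le e^{C_T t}$ would force $C(\theta)\le 1$ at $t=0$, whereas $E_\theta(z)>e^{z^{1/\theta}}$ for small $z>0$ when $\theta<1$. What your argument actually delivers is $\phi(t)\le C_T(\theta,L)\,A$ on $[0,T]$, not the prefactor-free form $e^{C_T(\theta)t}A$; the latter is in fact unattainable in general, as the example $\theta=1/2$, $L=\sqrt{\pi}/2$, $\phi(t)=A(1+\sqrt{t})$ shows (it satisfies the hypothesis but $1+\sqrt{t}\le e^{Ct}$ fails as $t\to 0^+$). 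You should therefore state the conclusion in the form you actually prove. This defect is shared by the paper's own proof, whose final appeal to the classical Gronwall lemma likewise produces a prefactor $\tilde C_T(\theta)>1$, so it does not count against your approach relative to theirs, but it is worth being precise about since the prefactor-free form is what gets iterated in the proof of the main theorem.
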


\begin{proof}
The proof of this Lemma is well-known and is based on
  an iteration argument; see for instance
  \cite[Lemma~4.3]{Diethelm2004}  or \cite[Corollary 2]{Ye2007}. We
  sketch the argument for the sake of completeness.
Iterating inequality \eqref{ingegality1} once, we have
\begin{align*}
\phi(t) &\le A + \frac{L}{\Gamma(\theta)} \int_0^t (t-s)^{\theta-1} \phi(s) \, ds\\
&\le  A + \frac{L}{\Gamma(\theta)}  \int_0^t (t-s)^{\theta-1} \left( A
  + \frac{L}{\Gamma(\theta)} \int_0^s (s-r)^{\theta-1} \phi(r) \, dr
\right)  ds\\  
&= A\(1 + \frac{L}{\theta \Gamma(\theta)} T^{\theta} \) +
\frac{L^2}{\Gamma(\theta)^2} \int_0^t (t-s)^{\theta-1}\int_0^s
(s-r)^{\theta-1} \phi(r) \, dr \, ds. 
\end{align*}
From Fubini's Theorem, we get
\begin{align*}
\int_0^t (t-s)^{\theta-1}\int_0^s (s-r)^{\theta-1} \phi(r) \, dr \, ds
&= \int_0^t \phi(r) \int_r^t (t-s)^{\theta-1} (s-r)^{\theta-1} ds \,
dr \\ 
&= \int_0^t \phi(r) (t-r)^{2\theta-1} \left(\int_0^1 (1 -
  \tau)^{\theta-1} \tau^{\theta-1} d\tau \right)  dr \\ 
&= \beta(\theta,\theta) \int_0^t \phi(r) (t-r)^{2\theta-1}  dr,
\end{align*}
where $\beta$ is the beta function. Therefore, we have
\begin{equation}
\phi(t) \le  C_T(\theta) A+ \frac{L^2}{\Gamma(\theta)^2}
\beta(\theta,\theta) \int_0^t \phi(s) (t-s)^{2\theta-1} \, ds. 
\label{inegalite2}
\end{equation}
Iterating the estimate \eqref{inegalite2} $n$ times, 
with $n\theta\ge 1$, we get the
following estimate:
\begin{equation*}
\phi(t) \le \tilde{C}_T(\theta) A + \tilde{L}_T(\theta) \int_0^t
\phi(s) (t-s)^\alpha ds, 
\end{equation*}
with $\alpha \ge 0$, and where $\tilde{L}_T(\theta)$
is a positive constant which depends on $T$ and $\theta$. The lemma
then follows from the classical Gronwall Lemma.
\end{proof}


\begin{lemma}[Modified fractional Gronwall Lemma] 
\label{lemme2}
Let $\phi:[0,T] \to \R_+$ be a bounded measurable function and
$P$ be a polynomial with positive coefficients and no constant
term. We assume there exists two positive constants $C$ and $\theta
\in ]0,1[$ such that for all $t \in [0,T]$, 
\begin{equation}
0 \le \phi(t) \le \phi(0) + P(t) + C \frac{d^{-\theta}}{d t^{-\theta}}\phi(t) .
\label{inegalite3}
\end{equation}
Then there exists $C_T(\theta)$ such that for all $t\in [0,T]$,
\begin{equation*}
\phi(t) \le C_T(\theta)\,\phi(0) + C_T(\theta) \, P(t).
\end{equation*}
\end{lemma}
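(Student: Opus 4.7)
The plan is to adapt the iteration from the proof of Lemma~\ref{lemme1}, carefully tracking how the terms $\phi(0)$ and $P(t)$ propagate at each step. Denote $I^{\theta} f(t) = \frac{1}{\Gamma(\theta)} \int_0^t (t-s)^{\theta-1} f(s)\,ds$, so the hypothesis reads $\phi(t) \le \phi(0) + P(t) + C\,I^{\theta}\phi(t)$.

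First I would substitute this inequality into itself $n$ times. Using the semigroup property $I^{\theta}\circ I^{\theta} = I^{2\theta}$, which follows from the Fubini and Beta-function identity already exploited in Lemma~\ref{lemme1}, one obtains after $n$ iterations
\begin{equation*}
\phi(t) \le \sum_{k=0}^{n-1} C^k\,I^{k\theta}\bigl(\phi(0) + P\bigr)(t) + C^n\,I^{n\theta}\phi(t).
\end{equation*}
The key observation is that since $P$ has positive coefficients and no constant term, $P$ is nondecreasing on $[0,+\infty[$, so $P(s) \le P(t)$ whenever $0\le s\le t$. Combining this with $I^{k\theta}(1)(t) = t^{k\theta}/\Gamma(k\theta+1)$ yields
\begin{equation*}
I^{k\theta}\bigl(\phi(0)+P\bigr)(t) \le \bigl(\phi(0)+P(t)\bigr)\cdot \frac{t^{k\theta}}{\Gamma(k\theta+1)}.
\end{equation*}

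Summing then gives $\phi(t) \le \bigl(\phi(0)+P(t)\bigr)\sum_{k=0}^{n-1} (CT^{\theta})^k/\Gamma(k\theta+1) + C^n\,I^{n\theta}\phi(t)$. For the remainder, since $\phi$ is bounded on $[0,T]$ by assumption, $C^n\,I^{n\theta}\phi(t) \le \|\phi\|_{L^{\infty}(0,T)}(CT^{\theta})^n/\Gamma(n\theta+1)$, which tends to $0$ as $n\to\infty$ because the Mittag--Leffler series $E_{\theta}(z) = \sum_{k\ge 0} z^k/\Gamma(k\theta+1)$ converges for every $z\in\R$. Letting $n\to\infty$ yields $\phi(t) \le E_{\theta}(CT^{\theta})\bigl(\phi(0) + P(t)\bigr)$, which is the desired conclusion with $C_T(\theta) = E_{\theta}(CT^{\theta})$.

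The main obstacle is keeping $P(t)$ rather than $P(T)$ on the right-hand side of the final bound; this is exactly where the monotonicity of $P$ on $[0,T]$ (and hence the hypotheses on its coefficients) is essential, as it lets one pull $P(s)$ out of every fractional integral as $P(t)$ rather than its supremum over $[0,T]$. An alternative would be to iterate only finitely many times, until the kernel power $(t-s)^{n\theta-1}$ becomes nonnegative and bounded, and then invoke the classical Gronwall lemma exactly as in the proof of Lemma~\ref{lemme1}; this also works but requires slightly more careful bookkeeping of the polynomial contributions.
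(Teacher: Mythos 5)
Your proof is correct, but it takes a genuinely different route from the paper's. You iterate the inequality \emph{infinitely} many times, keep the term $\phi(0)+P(t)$ intact at every stage by exploiting the monotonicity of $P$ (so that $P(s)\le P(t)$ can be pulled out of each fractional integral), and sum the resulting Mittag--Leffler series $\sum_k (CT^{\theta})^k/\Gamma(k\theta+1)$, with the remainder $C^n I^{n\theta}\phi$ killed by the super-exponential growth of $\Gamma(n\theta+1)$. The paper instead iterates only finitely many times, just enough to reach an integral inequality $\phi(t)\le c_0\phi(0)+c_0P(t)+\underline{C}\int_0^t\phi(s)(t-s)^{\alpha}ds$ with $\alpha>0$, and then --- since a crude application of the classical Gronwall lemma would degrade $P(t)$ to $P(T)$ --- introduces the weighted quantity $\psi(t)=\bigl(c_0\phi(0)+c_0P(t)+\underline{C}\int_0^t\phi(s)(t-s)^{\alpha}ds\bigr)e^{-C_1t}$ and shows $\psi'(t)\le c_0P'(t)e^{-C_1t}$ for $C_1$ large, which after integration preserves $P(t)$. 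Both arguments hinge on exactly the same structural fact about $P$ (positive coefficients, no constant term, hence nondecreasing and $P(0)=0$); yours uses it inside every fractional integral, the paper's uses it once in the intermediate iterates and again via $P'\ge 0$ in the weighted differential inequality. Your version has the advantage of producing the explicit constant $C_T(\theta)=E_{\theta}(CT^{\theta})$ and avoiding the weighted-derivative computation; the paper's has the advantage of reusing verbatim the finite-iteration machinery already set up for Lemma~\ref{lemme1} and never requiring convergence of an infinite series. Either is a complete proof.
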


\begin{proof}
Arguing as in the proof of Lemma~\ref{lemme1}, we iterate the previous
inequality. After one iteration, we get
\begin{align*}
\phi(t) &\le \tilde C \phi(0) + P(t) +\frac{C}{\Gamma(\theta)}\int_0^t
(t-s)^{\theta-1}P(s)ds \\
&\quad + \frac{C^2}{\Gamma(\theta)^2}\int_0^t
(t-s)^{\theta-1}\(\int_0^s (s-r)^{\theta-1}\phi(r)dr\)ds\\
&\le \tilde C \phi(0) + \tilde C P(t) + \frac{C^2}{\Gamma(\theta)^2}
\beta(\theta,\theta) \int_0^t \phi(s)(t-s)^{2\theta-1}ds,
\end{align*}
where we have used the assumptions on $P$, and Fubini's Theorem again
for the last term. Iterating sufficiently many times, we infer like in
the proof of Lemma~\ref{lemme1}:
\begin{equation}\label{eq:1717}
  \phi(t)\le c_0 \phi(0)+c_0 P(t)+ \underline C \int_0^t
  \phi(s)(t-s)^\alpha ds,
\end{equation}
with $\alpha> 0$.
Set 
\begin{equation*}
\psi(t) =\left( c_0 \phi(0)+c_0 P(t)+ \underline C \int_0^t
  \phi(s)(t-s)^\alpha ds\right) e^{- C_1  t}.
\end{equation*}
Then 
\begin{align*}
\psi'(t) &= \Big(  c_0 P'(t) + \underline C \alpha \int_0^t
  \phi(s)(t-s)^{\alpha-1} ds \\
&\quad - C_1 \Big( c_0 \phi(0) +  c_0
    P(t) + \underline C \int_0^t \phi(s)(t-s)^\alpha ds  \Big)  \Big)
e^{-C_1 t}.
\end{align*}
Using \eqref{eq:1717} to control the second term, and choosing $C_1$
sufficiently large, we infer:
\begin{equation*}
  \psi'(t) \le c_0 P'(t)e^{-C_1 t}.
\end{equation*}
Since $P(0) = 0, $ $\psi(0)= c_0 \phi(0)$, for all $t \in [0,T]$,
\begin{align*}
\phi(t) &\le \psi(t)e^{C_1 t} \le c_0 \phi(0)e^{ C_1 t }
+ c_0\int_0^t   P'(s) e^{C_1 (t-s)} ds 
 \\
 &\le  c_0 \phi(0)e^{ C_1 T } +c_0 e^{ C_1 T } 
 \int_0^t P'(s) \, ds \le  c_0 e^{ C_1 T }\left( \phi(0) + P(t)\right) .
\end{align*}
This completes the proof.
\end{proof}

\section{Estimates on the various flows\label{sectionlemme}}


\subsection{Estimates on linear flows}
In this paragraph, we collect several estimates concerning the
convolutions with $D$, $K$ and $G$, which will be useful in the
estimates of the local error of the scheme. 
\begin{proposition}\label{estimnonlocal}
Let $s\in \R$ and $\varphi \in  H^s(\R)$. Then $\I[\varphi] \in
H^{s-4/3}(\R)$ and we have  
\begin{equation}
\|\I[\varphi]\|_{H^{s-4/3}(\R)} \le 4 \pi^2 \Gamma\(\frac{2}{3}\)
\|\varphi\|_{H^s(\R)}. 
\end{equation}
\end{proposition}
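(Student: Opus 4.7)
The plan is to read the statement as a direct consequence of the Fourier-side description of $\I$ recorded in \eqref{fourier}. First I would observe that \eqref{fourier}, when applied to $\hat\varphi$, identifies $\I$ as a Fourier multiplier with symbol
\begin{equation*}
m(\xi) = -4\pi^2 \Gamma\!\left(\frac{2}{3}\right)\left(\frac{1}{2}-i\,\mbox{sgn}(\xi)\frac{\sqrt{3}}{2}\right)|\xi|^{4/3}.
\end{equation*}
The angular factor $\frac{1}{2}-i\,\mbox{sgn}(\xi)\frac{\sqrt{3}}{2}$ has modulus one, so $|m(\xi)| = 4\pi^2\Gamma(2/3)|\xi|^{4/3}$ for every $\xi\in\R$.

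Next I would simply apply Plancherel's theorem to compute
\begin{equation*}
\|\I[\varphi]\|_{H^{s-4/3}(\R)}^2 = \int_{\R}(1+|\xi|^2)^{s-4/3}\,|m(\xi)|^2\,|\hat\varphi(\xi)|^2\,d\xi = \bigl(4\pi^2\Gamma(2/3)\bigr)^2\int_{\R}(1+|\xi|^2)^{s-4/3}\,|\xi|^{8/3}\,|\hat\varphi(\xi)|^2\,d\xi.
\end{equation*}
The only remaining ingredient is the elementary pointwise bound $|\xi|^{8/3}\le (1+|\xi|^2)^{4/3}$, which upgrades the Japanese-bracket weight to $(1+|\xi|^2)^s$ and gives
\begin{equation*}
\|\I[\varphi]\|_{H^{s-4/3}(\R)}^2 \le \bigl(4\pi^2\Gamma(2/3)\bigr)^2\int_{\R}(1+|\xi|^2)^s|\hat\varphi(\xi)|^2\,d\xi = \bigl(4\pi^2\Gamma(2/3)\bigr)^2\|\varphi\|_{H^s(\R)}^2.
\end{equation*}
Taking square roots yields the claimed inequality.

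There is really no obstacle here: the statement is precisely the $L^2$-based Sobolev boundedness of a Fourier multiplier homogeneous of degree $4/3$ with bounded angular part, and the Plancherel computation above is sharp. The only tiny subtlety worth flagging is that the proof uses \eqref{fourier} with the implicit factor $\hat\varphi(\xi)$ on the right-hand side (the formula displayed in the excerpt should be read as the symbol of the operator $\I$, i.e. $\F(\I[\varphi])(\xi)=m(\xi)\hat\varphi(\xi)$), so that Plancherel applies directly.
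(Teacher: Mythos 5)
Your proof is correct and follows essentially the same route as the paper: both read \eqref{fourier} as giving the multiplier symbol of $\I$ (with unit-modulus angular factor), apply Plancherel, and use the pointwise bound $|\xi|^{8/3}\le (1+|\xi|^2)^{4/3}$ — which is exactly the paper's step of bounding $\bigl(|\xi|^2/(1+|\xi|^2)\bigr)^{4/3}$ by $1$. Your remark that the displayed formula \eqref{fourier} should be understood with the implicit factor $\hat\varphi(\xi)$ is also consistent with how the paper itself uses it.
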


\begin{proof}
For all $s\in \R$ and all $\varphi \in  H^s(\R)$, we have, using \eqref{fourier}
\begin{align*}
\|\I[\varphi]&\|_{H^{s-4/3}(\R)} = \left( \int_\R (1 +
  |\xi|^2)^{s-4/3} |\F(\I[\varphi])(\xi)|^2 \, d\xi   \right)^{1/2} \\ 
&= 4 \pi^2 \Gamma\(\frac{2}{3}\)  \left( \int_\R (1 + |\xi|^2)^{s-4/3}
  \left|\frac{1}{2}-i \mbox{ sgn}(\xi) \frac{\sqrt{3}}{2}\right|^2 |\xi |^{8/3}
  |\F(\varphi)(\xi)|^2 \, d\xi \right)^{1/2}  \\ 
&= 4 \pi^2 \Gamma\(\frac{2}{3}\) \left( \int_\R \left( \frac{|\xi|^2}{
      1+ |\xi|^2}\right) ^{4/3} (1 + |\xi|^2)^s | \F(\varphi)(\xi)|^2
  \, d\xi \right)^{1/2} \\ 
&\le  4 \pi^2 \Gamma\(\frac{2}{3}\) \left[\int_\R (1 + |\xi|^2)^s |
  \F(\varphi)(\xi)|^2 \, d\xi \right]^{1/2} = 4 \pi^2
\Gamma\(\frac{2}{3}\) \|\varphi\|_{H^s(\R)},
\end{align*}
hence the result. 
\end{proof}

\begin{lemma} 
\label{lemme3} 
$(1)$ Let $n \in \N.$ Then, for all $v \in H^{n}(\R)$ and all $t>0$,
\begin{equation*}
\|X^t v\|_{H^{n}(\R)} \le e^{\beta_0 t} \|v\|_{H^n(\R)},
\end{equation*}
where $\beta_0 = - \min \RE(\phi_\I) > 0$. \\
$(2)$ Let $n \in \N.$ There exists $C$ such that for all $v \in H^2(\R)$ and all $t>0$,
\begin{equation}
\|X^t v - v \|_{H^n(\R)} \le C \, t \, e^{\beta_0 t} \|v \|_{H^{n+2}(\R)}.
\end{equation}
\end{lemma}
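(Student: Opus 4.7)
\textbf{Proof plan for Lemma~\ref{lemme3}.}

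The plan is to exploit the fact that $X^t$ is a Fourier multiplier with symbol $e^{-t\phi_\I(\xi)}$, so everything reduces to pointwise estimates on $\phi_\I$ combined with Plancherel's identity. The only nonroutine ingredient is controlling $\I[v]$ in Sobolev norms, for which Proposition~\ref{estimnonlocal} is tailor-made. I do not anticipate any serious obstacle; both parts should follow from a direct computation. (I also read the hypothesis $v\in H^2(\R)$ in part~(2) as a typo for $v\in H^{n+2}(\R)$, which is what the right-hand side requires.)

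For part~(1), I would write $X^t v = \F^{-1}\bigl(e^{-t\phi_\I}\,\hat v\bigr)$ and use Plancherel:
\begin{equation*}
\|X^t v\|_{H^n(\R)}^2 = \int_\R (1+|\xi|^2)^n \bigl|e^{-t\phi_\I(\xi)}\bigr|^2 |\hat v(\xi)|^2 \, d\xi.
\end{equation*}
Since $|e^{-t\phi_\I(\xi)}| = e^{-t\,\RE \phi_\I(\xi)} \le e^{t\beta_0}$ uniformly in $\xi$ by the very definition of $\beta_0 = -\min \RE(\phi_\I)$, the desired bound follows immediately. (Note that $\RE \phi_\I(\xi) = 4\pi^2\eta\xi^2 - a_\I|\xi|^{4/3}$ is negative near the origin, so $\beta_0 > 0$.)

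For part~(2), I would use the integral identity
\begin{equation*}
X^t v - v = \int_0^t \partial_s (X^s v) \, ds = \int_0^t X^s\bigl(\eta \partial_x^2 v - \I[v]\bigr) \, ds,
\end{equation*}
valid for $v \in H^{n+2}(\R)$ because then $\eta \partial_x^2 v - \I[v] \in H^n(\R)$. Applying part~(1) under the integral gives
\begin{equation*}
\|X^t v - v\|_{H^n(\R)} \le \int_0^t e^{\beta_0 s}\,\bigl\|\eta \partial_x^2 v - \I[v]\bigr\|_{H^n(\R)} \, ds \le t\, e^{\beta_0 t}\bigl(\eta\|v\|_{H^{n+2}(\R)} + \|\I[v]\|_{H^n(\R)}\bigr).
\end{equation*}
Finally, Proposition~\ref{estimnonlocal} applied with $s = n + 4/3$ yields $\|\I[v]\|_{H^n(\R)} \le 4\pi^2 \Gamma(2/3)\,\|v\|_{H^{n+4/3}(\R)}$, and the embedding $H^{n+2}(\R)\hookrightarrow H^{n+4/3}(\R)$ finishes the argument, giving the announced constant $C$ depending only on $\eta$ and $\Gamma(2/3)$.
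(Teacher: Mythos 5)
Your proposal is correct and follows essentially the same route as the paper: Plancherel's identity with the uniform bound $|e^{-t\phi_\I(\xi)}|\le e^{\beta_0 t}$ for part (1), and for part (2) the identity $X^tv-v=\int_0^t \partial_s(X^sv)\,ds$ combined with Proposition~\ref{estimnonlocal} and the embedding $H^{n+2}(\R)\hookrightarrow H^{n+4/3}(\R)$ (the paper estimates $\I[X^sv]$ directly rather than commuting $\I$ past $X^s$, but the two orderings are interchangeable since both are Fourier multipliers). Your reading of the hypothesis in part (2) as $v\in H^{n+2}(\R)$ matches what the paper's own proof assumes.
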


\begin{proof}
Using Plancherel formula, we have
\begin{align*}
\|X^t v \|_{L^2(\R)}^2 &= \|D(t,\cdot) \ast v \|_{L^2(\R)}^2 \\
&= \|\F \left( D(t,\cdot)\right)  \F v \|_{L^2(\R)}^2 = \int_\R |\F
\left( D(t,\cdot)\right) (\xi)|^2 |\F v(\xi)|^2 \, d\xi \\ 
&=  \int_\R e^{-2t \phi_\I(\xi)}  |\F v(\xi)|^2 \, d\xi \le e^{2
  \beta_0 t} \|v\|_{L^2(\R)}^2. 
\end{align*}
Moreover, since
\begin{equation*}
\partial^{n}_{x} X^t v = D(t, \cdot) \ast \partial^{n}_{x} v 
\end{equation*}
then, from again Plancherel formula, we have
\begin{equation*}
\|\partial^n_{x} X^t v\|_{L^2(\R)} \le e^{\beta_0 t} \| \partial^n_{x} v\|_{L^2(\R)},
\end{equation*}
hence the first point of the lemma.

Let $n\in \N$, $v \in H^{n+2}(\R)$. We have
\begin{equation*}
\|X^t v - v \|_{H^n(\R)} = \left\| \int_{0}^t \stackrel{.}{X^s} v \,
  ds \right\|_{H^n(\R)}.  
\end{equation*}
But from the definition of $X^t$, $\stackrel{.}{X^s}$ is given by
\begin{equation*}
\stackrel{.}{X^s} v = \eta \d_x^2 X^s v - \I[X^s v] =  \eta X^s \d_x^2
v - \I[X^s v],  
\end{equation*}
since $X^s \d_x^2 v = D(s, \cdot) \ast \d_x^2 v = \d_x^2 \left( D(s,
  \cdot) \ast v\right)$. Thus, 
using Proposition~\ref{estimnonlocal} and the first point of this lemma, we get
\begin{align*}
\|X^t v - v \|_{H^n(\R)} & \le  \eta \int_{0}^t \| X^s \d_x^2 v
\|_{H^n(\R)} \, ds +  \int_{0}^t \|\I[X^s v ] \|_{H^n(\R)} \, ds \\ 
& \le   \eta \, t \, e^{\beta_0 t}\|v\|_{H^{n+2}(\R)} + \int_0^t \|\I[X^s
v ] \|_{H^n(\R)}ds \\ 
& \le \eta \, t \, e^{\beta_0 t}  \|v\|_{H^{n+2}(\R)} + 4 \pi^2
\Gamma\(\frac{2}{3}\) \int_0^t \|X^s v \|_{H^{n+4/3}(\R)}ds \\ 
& \le \eta \, t \, e^{\beta_0 t}  \|v\|_{H^{n+2}(\R)} + 4 \pi^2
\Gamma\(\frac{2}{3}\) \int_0^t \|X^s v \|_{H^{n+2}(\R)}ds \\ 
& \le  \left( \eta + 4 \pi^2 \Gamma\(\frac{2}{3}\) \right) t \,
e^{\beta_0 t}  \|v\|_{H^{n+2}(\R)},    
\end{align*}
hence the result.
\end{proof}
Recalling that $K$ corresponds to $D$ in the case $\eta=1$, we readily
infer:
\begin{corollary}\label{corokernel}
For all $w \in H^2(\R)$ and all $t>0$,
\begin{equation}
\|K(t, \cdot)\ast w - w \|_{L^2(\R)} \le C\, t \, e^{\alpha_0 t} \|w\|_{H^2(\R)},
\end{equation}
where $C$ is a positive constant independent of $t$ and $w$.
\end{corollary}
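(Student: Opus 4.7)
The plan is to deduce this corollary as a direct specialization of Lemma~\ref{lemme3}(2) to the case $\eta=1$. The key observation, already flagged in the remark preceding the corollary, is that the kernel $D$ of the semigroup $X^t$ associated to $\I - \eta\partial_x^2$ reduces to the kernel $K$ of the full linear part $\I-\partial_x^2$ of the Fowler equation precisely when $\eta=1$. Correspondingly, the symbol $\phi_\I(\xi)=4\pi^2\eta\xi^2 - a_\I|\xi|^{4/3}+ib_\I\xi|\xi|^{1/3}$ collapses to $\psi_\I(\xi)$ when $\eta=1$, so the constant $\beta_0=-\min \RE(\phi_\I)$ becomes $\alpha_0=-\min \RE(\psi_\I)$ in this case.

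First, I would make the identification $X^t w = K(t,\cdot)\ast w$ explicit under $\eta=1$, using the definition $X^t w = \F^{-1}(e^{-t\phi_\I})\ast w$ together with $K(t,\cdot)=\F^{-1}(e^{-t\psi_\I})$. Once this is established, applying Lemma~\ref{lemme3}(2) with $n=0$ yields
\begin{equation*}
  \|K(t,\cdot)\ast w - w\|_{L^2(\R)} = \|X^t w - w\|_{L^2(\R)} \le C\, t\, e^{\beta_0 t}\,\|w\|_{H^2(\R)},
\end{equation*}
which is exactly the desired inequality with $\beta_0$ replaced by $\alpha_0$.

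There is really no substantial obstacle here: the work has been done in the proof of Lemma~\ref{lemme3}(2), which relied on writing $X^t w - w = \int_0^t \dot X^s w\,ds$, bounding $\eta X^s \partial_x^2 w$ via the $L^2$-contraction-type estimate from part (1), and controlling the nonlocal term $\I[X^s w]$ via Proposition~\ref{estimnonlocal} together with the embedding $H^{n+2}\hookrightarrow H^{n+4/3}$. Specializing to $\eta=1$ and $n=0$ requires no new computation. The only care needed is notational, to avoid confusion between the two symbols $\phi_\I$ and $\psi_\I$ and between $\beta_0$ and $\alpha_0$, and to emphasize that the constant $C$ in the conclusion is the same $C$ produced by Lemma~\ref{lemme3}(2), hence independent of $t$ and $w$.
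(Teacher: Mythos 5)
Your proposal is correct and matches the paper's own argument exactly: the paper states ``Recalling that $K$ corresponds to $D$ in the case $\eta=1$, we readily infer'' and leaves the corollary as the specialization of Lemma~\ref{lemme3}(2) with $n=0$, $\eta=1$, under which $\phi_\I$ becomes $\psi_\I$ and $\beta_0$ becomes $\alpha_0$. Your additional remarks on the notational identifications are accurate and require no further justification.
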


We conclude this paragraph with an analogous result on the heat kernel $G$:
\begin{lemma}\label{lemmechaleur}
For all $w \in H^2 (\R)$ and all $t > 0$,
\begin{equation*}
\|G(t, \cdot) \ast w - w\|_{L^2(\R)} \le \varepsilon\, t \, \|w\|_{H^2(\R)}.
\end{equation*}
\end{lemma}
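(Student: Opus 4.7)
The plan is to mimic the proof of Lemma~\ref{lemme3}, but in the simpler setting of the pure heat equation. Since $G(s,\cdot)$ is the Green function of $\partial_s - \varepsilon\partial_x^2$, the function $s\mapsto G(s,\cdot)\ast w$ is differentiable into $L^2(\R)$ for $w\in H^2(\R)$, and satisfies $\partial_s(G(s,\cdot)\ast w) = \varepsilon\partial_x^2(G(s,\cdot)\ast w) = \varepsilon\, G(s,\cdot)\ast\partial_x^2 w$, where the last equality uses that convolution commutes with differentiation. The fundamental theorem of calculus then yields
\begin{equation*}
G(t,\cdot)\ast w - w = \varepsilon\int_0^t G(s,\cdot)\ast \partial_x^2 w\, ds.
\end{equation*}

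Next I would take the $L^2$-norm of both sides and use the $L^2$-contraction property of $G(s,\cdot)\ast$, which follows directly from Plancherel's formula since $|\F(G(s,\cdot))(\xi)| = e^{-4\pi^2\varepsilon s\xi^2}\le 1$ for all $\xi\in\R$ and $s\ge 0$. This gives
\begin{equation*}
\|G(t,\cdot)\ast w - w\|_{L^2(\R)} \le \varepsilon \int_0^t \|G(s,\cdot)\ast \partial_x^2 w\|_{L^2(\R)}\,ds \le \varepsilon \int_0^t \|\partial_x^2 w\|_{L^2(\R)}\,ds \le \varepsilon\, t\, \|w\|_{H^2(\R)},
\end{equation*}
which is exactly the claimed bound.

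There is no real obstacle here: compared with the proof of Lemma~\ref{lemme3}, the argument is actually simpler because there is no nonlocal term $\I$ to estimate (so no $\Gamma(2/3)$ factor or Sobolev embedding between $H^{n+4/3}$ and $H^{n+2}$ is needed), and because the Fourier multiplier of $G$ is real and bounded by $1$, which makes the contraction constant exactly $1$ instead of $e^{\beta_0 t}$. The only minor point to be careful about is the justification of the time-differentiation under the convolution, which is standard for initial data in $H^2(\R)$ by dominated convergence in Fourier variables.
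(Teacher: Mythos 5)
Your proof is correct and follows essentially the same route as the paper: write $G(t,\cdot)\ast w - w = \varepsilon\int_0^t G(s,\cdot)\ast\partial_x^2 w\,ds$ by the fundamental theorem of calculus and conclude by the $L^2$-contractivity of convolution with $G(s,\cdot)$. The only cosmetic difference is that you justify the contraction via Plancherel and the bound $|\F(G(s,\cdot))(\xi)|\le 1$, whereas the paper uses Young's inequality together with $\|G(s,\cdot)\|_{L^1(\R)}=1$ from Proposition~\ref{heat}; the two are interchangeable here.
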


\begin{proof}
Proceeding as above, we have:
\begin{align*}
G(t, \cdot)\ast w - w &= \int_0^t \frac{\partial}{\partial t}\left(
  G(s, \cdot)\ast w\right) \, ds =  \varepsilon \int_0^t
\d_x^2\left(G(s, \cdot)\ast w \right) \, ds  \\
&=  \varepsilon \int_0^t  G(s, \cdot)\ast\d_x^2 w \, ds.
\end{align*}
Taking the norm $L^2$ and using Proposition~\ref{heat}, Young's
inequality yields
\begin{equation*}
\|G(t, \cdot) \ast w - w\|_{L^2(\R)} \le   \varepsilon \int_{0}^t
\|G(s,\cdot)\|_{L^1(\R)} \|\d_x^2 w \|_{L^2(\R)} \, ds\le
\varepsilon \, t \, \|w\|_{H^2(\R)},
\end{equation*}
hence the result.
\end{proof}

\subsection{Estimates on $Y^t$}

We now turn to  the viscous Burgers' equation \eqref{burgers}:
\begin{equation}
  \label{eq:vb}
   \d_t w -\varepsilon  \d_x^2w +w \d_x w=0;\quad w_{\mid t=0}=w_0.  
\end{equation}

\begin{remark}[Hopf--Cole transform]
 It is well-known that the change of unknown function
  \begin{equation*}
    w = -2\varepsilon \frac{1}{\phi}\d_x \phi = -2\varepsilon \d_x\(\ln \phi\),
  \end{equation*}
turns the viscous Burgers' equation into the heat equation:
\begin{equation*}
  \d_t \phi -\varepsilon \d_x^2\phi=0. 
\end{equation*}
We infer the explicit formula:
\begin{equation*}
  w(t,x) = -2\varepsilon \d_x \ln \(\frac{1}{\sqrt{4\pi\varepsilon
      t}}\int_{-\infty}^{+\infty} \exp\(-\frac{(x-y)^2}{4\varepsilon
    t}-\frac{1}{2\varepsilon} \int_0^y w_0(z)dz\)dy\).
\end{equation*}
However, this formula does not seem very helpful in order to establish
Proposition~\ref{prop:vb}. 
\end{remark}

\begin{proposition}\label{prop:vb}
  Let $w_0\in H^1(\R)$. Then \eqref{eq:vb} has a unique solution $w\in
  C(\R_+;H^1(\R))$. In addition, there exists $C=C(\varepsilon,\|w_0\|_{H^1(\R)})$
  such that for all $t\ge 0$,
  \begin{equation*}
    \|w(t)\|_{L^2(\R)}\le \|w_0\|_{L^2(\R)},\quad \|\d_x w(t)\|_{L^2(\R)}\le
    \|w_0'\|_{L^2(\R)} e^{C(t^{5/8}+t)}.
  \end{equation*}
If in addition $w_0\in H^n(\R)$, for some $n\ge 2$, then $w\in
C(\R_+;H^n(\R))$ and for 
all $T>0$, there
exists $M=M(\varepsilon,T,\|w_0\|_{H^2(\R)})$ 
  such that for all $t\in [0,T]$,
  \begin{equation*}
    \|w(t)\|_{H^n(\R)}\le
    \|w_0\|_{H^n(\R)} e^{Mt}.
  \end{equation*}
\end{proposition}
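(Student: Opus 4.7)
The plan is to prove the four assertions in order: local well-posedness, the $L^2$ bound, the $H^1$ bound, and higher regularity by induction. Local existence and uniqueness in $C(\mathbf{R}_+; H^1(\mathbf{R}))$ would follow from a standard contraction mapping argument applied to the Duhamel formulation \eqref{burgerduhamel}, set in $C([0,\tau]; H^1(\mathbf{R}))$ for $\tau$ small. The two ingredients are the integrable singularity $\|\partial_x G(t, \cdot)\|_{L^1(\mathbf{R})} \le C t^{-1/2}$ from Proposition~\ref{heat}, together with the Moser-type algebra property of $H^1(\mathbf{R})$ in one space dimension, which gives $\|w^2\|_{H^1} \le C \|w\|_{H^1}^2$.

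For the $L^2$ assertion, multiply \eqref{eq:vb} by $w$ and integrate over $\mathbf{R}$. The nonlinear contribution $\int w^2 \partial_x w\, dx = \frac{1}{3} \int \partial_x(w^3)\, dx = 0$ vanishes, giving
$$\frac{1}{2}\frac{d}{dt}\|w(t)\|_{L^2(\mathbf{R})}^2 + \varepsilon \|\partial_x w(t)\|_{L^2(\mathbf{R})}^2 = 0.$$
This produces both $\|w(t)\|_{L^2(\mathbf{R})} \le \|w_0\|_{L^2(\mathbf{R})}$ and the dissipation identity $\int_0^t \|\partial_x w(s)\|_{L^2(\mathbf{R})}^2\, ds \le \tfrac{1}{2\varepsilon}\|w_0\|_{L^2(\mathbf{R})}^2$, which is the crucial global input used to control the $H^1$ norm.

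For the $H^1$ bound I would differentiate \eqref{burgerduhamel}, take the $L^2$ norm, and apply Young's convolution inequality together with Proposition~\ref{heat}, to obtain
$$\|\partial_x w(t)\|_{L^2} \le \|\partial_x w_0\|_{L^2} + C \int_0^t (t-s)^{-1/2} \|w(s)\|_{L^\infty}\|\partial_x w(s)\|_{L^2}\, ds.$$
The Gagliardo--Nirenberg inequality $\|w\|_{L^\infty} \le \sqrt{2}\|w\|_{L^2}^{1/2}\|\partial_x w\|_{L^2}^{1/2}$ combined with the $L^2$ step converts this into a nonlinear Volterra inequality involving $\|\partial_x w(s)\|_{L^2}^{3/2}$. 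Interpolating the time integral by H\"older against the dissipation identity and bootstrapping through Lemma~\ref{lemme2} then yields the claimed exponential bound; the precise form $e^{C(t^{5/8}+t)}$ reflects the specific H\"older exponents that match the $(t-s)^{-1/2}$ singularity of the kernel to the $L^2$-in-time control of $\|\partial_x w\|_{L^2}$. Higher regularity for $w_0 \in H^n(\mathbf{R})$, $n \ge 2$, is then obtained by induction: applying $\partial_x^n$ to \eqref{eq:vb}, pairing in $L^2$ with $\partial_x^n w$, expanding $\partial_x^n(w \partial_x w)$ by Leibniz, and integrating by parts, one finds that the top-order contribution is bounded by $\|\partial_x w\|_{L^\infty}\|\partial_x^n w\|_{L^2}^2$ while the mixed terms only involve derivatives of order at most $n-1$, handled by the inductive hypothesis and further Gagliardo--Nirenberg interpolation. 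This produces a linear differential inequality for $\|\partial_x^n w\|_{L^2}^2$, which integrates to the stated exponential bound.

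The main obstacle is the $H^1$ step. Naive Gagliardo--Nirenberg without using the dissipation leads to a Volterra inequality which is genuinely superlinear in $\|\partial_x w\|_{L^2}$ and could in principle allow finite-time blow-up of the bound. The crucial saving is the $L^2$-in-time control on $\|\partial_x w\|_{L^2}$ furnished by the energy identity, and pairing this with the singular kernel $(t-s)^{-1/2}$ via a H\"older split of the right shape is exactly what matches the hypotheses of the modified fractional Gronwall lemma and produces the polynomial factor in front of the exponential.
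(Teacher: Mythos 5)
Your $L^2$ step and your local well-posedness sketch are fine, and your $H^1$ strategy (Duhamel formula, $\|\d_x G(t-s)\|_{L^1}\lesssim (t-s)^{-1/2}$, Gagliardo--Nirenberg, and the dissipation identity $\int_0^\infty\|\d_x w\|_{L^2}^2\,ds\le \tfrac{1}{2\eps}\|w_0\|_{L^2}^2$ coming from the $L^2$ identity) is a legitimate alternative to the paper's energy-method proof, which instead works on the $H^1$ identity \eqref{eq:H1}. But the way you propose to close it does not work as stated: after Gagliardo--Nirenberg you arrive at $\phi(t)\le\phi(0)+C\int_0^t(t-s)^{-1/2}\phi(s)^{3/2}\,ds$ with $\phi=\|\d_x w\|_{L^2}$, which is superlinear, and Lemma~\ref{lemme2} only applies to the linear inequality \eqref{inegalite3}; no H\"older split of $(t-s)^{-1/2}\phi^{3/2}$ against $\int\phi^2$ produces a term of the form $\int_0^t(t-s)^{\theta-1}\phi(s)\,ds$. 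What does work is to write $\int_0^t(t-s)^{-1/2}\phi^{3/2}\le \sup_{[0,t]}\phi\cdot\bigl(\int_0^t(t-s)^{-2/3}ds\bigr)^{3/4}\bigl(\int_0^t\phi^2\bigr)^{1/4}\lesssim t^{1/4}\sup_{[0,t]}\phi$, absorb the right-hand side for $t\le t_*(\eps,\|w_0\|_{L^2})$, and iterate over intervals of length $t_*$ using the semigroup property and the monotonicity of $\|w\|_{L^2}$; this yields an exponential bound, though not literally the form $\|w_0'\|_{L^2}e^{C(t^{5/8}+t)}$, and the exponent $5/8$ actually arises in the paper from interpolating against $\int_0^t\|\d_x^2w\|_{L^2}^2\,ds$, not against $\int_0^t\|\d_xw\|_{L^2}^2\,ds$.

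The genuine gap is in the higher-regularity induction, at the base case $n=2$. Your top-order Gronwall coefficient is $\|\d_x w(t)\|_{L^\infty}$, so integrating the resulting linear differential inequality requires control of $\int_0^t\|\d_x w(s)\|_{L^\infty}\,ds$. For $n=2$ the inductive hypothesis only provides $\sup_t\|\d_x w(t)\|_{L^2}$ and $\int_0^t\|\d_xw\|_{L^2}^2\,ds$, and $H^1(\R)$ does not control $\|\d_x w\|_{L^\infty}$; the obvious estimate $\|\d_xw\|_{L^\infty}\le\sqrt 2\,\|\d_xw\|_{L^2}^{1/2}\|\d_x^2w\|_{L^2}^{1/2}\lesssim\|w\|_{H^2}$ turns your inequality into a Riccati-type bound $\frac{d}{dt}\|w\|_{H^2}^2\lesssim\|w\|_{H^2}^3$, which may blow up in finite time. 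This is precisely where the paper invests its effort: the $H^1$ energy identity yields the second-level smoothing $\eps\int_0^t\|\d_x^2w(s)\|_{L^2}^2\,ds\le\|w_0'\|_{L^2}^2+Ct$ (estimate \eqref{eq:L2H2}), whence $\int_0^t\|\d_xw(s)\|_{L^\infty}\,ds\le C(t^{5/8}+t)$, and only then do the $H^2$ and, by bootstrap, $H^n$ Gronwall arguments close. Your Duhamel-based $H^1$ step bypasses this smoothing estimate entirely, so your induction cannot start; you must either recover $\d_xw\in L^1_{\rm loc}(\R_+;L^\infty(\R))$ from the viscous term as the paper does, or close the $H^2$ estimate differently, e.g.\ by absorbing $\int_\R|\d_xw|(\d_x^2w)^2$ into $\eps\|\d_x^3w\|_{L^2}^2$ via Gagliardo--Nirenberg and Young.
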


\begin{proof}
  The existence and uniqueness part being standard, we focus on the
  estimates. The $L^2$ estimate yields (formally, multiply
  \eqref{eq:vb} by $w$  and integrate)
  \begin{equation}\label{eq:L2}
    \frac{1}{2}\frac{d}{dt}\|w(t)\|_{L^2}^2 + \varepsilon \|\d_x
    w(t)\|_{L^2}^2=0, 
  \end{equation}
and the $H^1$ estimate (differentiate \eqref{eq:vb} with respect to
$x$, multiply by $\d_x w$ and integrate),
\begin{equation}\label{eq:H1}
  \frac{1}{2}\frac{d}{dt}\|\d_x w(t)\|_{L^2}^2 + \varepsilon\|\d_x^2
  w(t)\|_{L^2}^2=-\frac{1}{4}\int_{\R}\(\d_x w(t,x)\)^3 dx. 
\end{equation}
The $L^2$ estimate \eqref{eq:L2} shows that the map $t\mapsto
\|w(t)\|_{L^2}^2$ is non-increasing:
\begin{equation*}
  \|w(t)\|_{L^2}\le \|w_0\|_{L^2},\quad \forall t\ge 0.
\end{equation*}
An integration by parts and Cauchy--Schwarz inequality then yield
\begin{equation}\label{eq:cachan92}
  \|\d_x w(t)\|_{L^2}^2\le \|w(t)\|_{L^2}\|\d_x^2 w(t)\|_{L^2}\le
  \|w_0\|_{L^2}\|\d_x^2 w(t)\|_{L^2}. 
\end{equation}
In order to take advantage of the smoothing effect provided by the
viscous part, integrate 
\eqref{eq:H1} in time and write
\begin{equation*}
  \varepsilon \int_0^t \|\d_x^2 w(s)\|_{L^2}^2 ds\le \frac{1}{2}\|w_0'\|_{L^2}^2 +
  \frac{1}{4}\int_0^t \|\d_x w(s)\|_{L^3}^3 ds. 
\end{equation*}
Gagliardo--Nirenberg inequality yields
\begin{equation*}
  \|\d_x w\|_{L^3} \le C \|\d_x w\|_{L^2}^{5/6}\|\d_x^2 w\|_{L^2}^{1/6},
\end{equation*}
so using \eqref{eq:cachan92}, we infer:
\begin{align*}
  \varepsilon\int_0^t \|\d_x^2 w(s)\|_{L^2}^2 ds & \le
  \frac{1}{2}\|w_0'\|_{L^2}^2 +C  \int_0^t \|\d_x
  w(s)\|_{L^2}^{5/2}\|\d_x^2 w(s)\|_{L^2}^{1/2} ds \\
&\le
  \frac{1}{2}\|w_0'\|_{L^2}^2 + C\|w_0\|_{L^2}^{5/4}\int_0^t \|\d_x^2
  w(s)\|_{L^2}^{7/4} ds. 
\end{align*}
In view of H\"older inequality in the last integral in time, 
\begin{align*}
 \varepsilon \int_0^t \|\d_x^2 w(s)\|_{L^2}^2 ds &\le
  \frac{1}{2}\|w_0'\|_{L^2}^2 + C\|w_0\|_{L^2}^{5/4}\(\int_0^t \|\d_x^2
  w(s)\|_{L^2}^2 ds\)^{7/8}t^{1/8}\\
&\le \frac{1}{2}\|w_0'\|_{L^2}^2 + \frac{\varepsilon}{2} \int_0^t
\|\d_x^2 w(s)\|_{L^2}^2 ds + C\(\|w_0\|_{L^2}\) t,
\end{align*}
where we have used Young inequality $ab\lesssim a^{8/7}+b^8$. We infer 
\begin{equation}\label{eq:L2H2}
  \varepsilon\int_0^t \|\d_x^2 w(s)\|_{L^2}^2 ds \le \|w_0'\|_{L^2}^2  +
  C\(\|w_0\|_{L^2}\) t. 
\end{equation}
Gagliardo--Nirenberg inequality $\|f\|_{L^\infty}\le \sqrt 2
\|f\|_{L^2}^{1/2}\|f'\|_{L^2}^{1/2}$ now yields
\begin{align*}
  \int_0^t\|\d_x w(s)\|_{L^\infty} ds &\le \sqrt 2 \int_0^t
  \|\d_x w(s)\|_{L^2}^{1/2} \|\d_x^2 w(s)\|_{L^2}^{1/2} ds\\
&\le C\(\varepsilon,\|w_0\|_{L^2}\)
\int_0^t\|\d_x^2 w(s)\|_{L^2}^{3/4} ds \\
&\le C\(\varepsilon,\|w_0\|_{L^2}\)
\(\int_0^t\|\d_x^2 w(s)\|_{L^2}^{2} ds\)^{3/8} t^{5/8} \\
&\le C\(\varepsilon,\|w_0\|_{H^1}\)\(1+t\)^{3/8}t^{5/8}\le
C\(\varepsilon,\|w_0\|_{H^1}\)\(t^{5/8} +t\), 
\end{align*}
where we have used \eqref{eq:cachan92}, H\"older inequality and
\eqref{eq:L2H2}, successively. 

Integrate the $H^1$ estimate \eqref{eq:H1} with respect to time,
and now discard the viscous part whose contribution is non-negative:
\begin{equation}
  \label{eq:ineqH1}
  \begin{aligned}
  \|\d_x w(t)\|_{L^2}^2 &\le \|w_0'\|_{L^2}^2 + \frac{1}{2}\int_0^t
  \|\d_x w(s)\|_{L^3}^3 ds \\
&\le \|w_0'\|_{L^2}^2 + \frac{1}{2}\int_0^t
  \|\d_x w(s)\|_{L^\infty} \|\d_x w(s)\|_{L^2}^2ds.
\end{aligned}
\end{equation}
The first part of the proposition then follows from the 
Gronwall lemma.  

To complete the proof of the proposition, we use the general $H^n$
estimate, for $n\in \N$: set $\Lambda = (1-\d_x^2)^{1/2}$. Applying $\Lambda^n$ to
\eqref{eq:vb} yields
\begin{equation*}
  \frac{1}{2}\frac{d}{dt}\|\Lambda^n w(t)\|_{L^2}= -\int_\R \Lambda^n
  w(t,x)\Lambda^n\(w\d_x w\)(t,x)dx +\eps\int_\R \Lambda^n
  w(t,x)\Lambda^n\d_x^2 w(t,x)dx.
\end{equation*}
Integrating by parts, the last term is non-positive, since
\begin{equation*}
  \int_\R \Lambda^n
  w(t,x)\Lambda^n\d_x^2 w(t,x)dx =- \int_\R \left|\Lambda^n\d_x
  w(t,x)\right|^2dx.
\end{equation*}
Write 
\begin{align*}
  \int_\R \Lambda^n
  w(t,x)\Lambda^n\(w\d_x w\)(t,x)dx &= \int_\R \Lambda^n
  w(t,x)\(w\d_x \Lambda^n w\)(t,x)dx \\
&+\int_\R \Lambda^n
  w(t,x)\( w\d_x \Lambda^n w- \Lambda^n\(w\d_x w\)\)(t,x)dx.
\end{align*}
Integrating by parts the first term yields
\begin{align*}
  \left|\int_\R \Lambda^n
  w(t,x)\(w\d_x \Lambda^n w\)(t,x)dx\right|
&=\frac{1}{2}\left|\int_\R \d_x\( \Lambda^n 
  w(t,x)\)^2w(t,x)dx\right|\\
& =\frac{1}{2}\left|\int_\R \( \Lambda^n
  w(t,x)\)^2\d_x w(t,x)dx\right| \\
&\le \frac{\|\d_x
  w(t)\|_{L\infty}}{2}\|w(t)\|_{H^n}^2. 
\end{align*}
In view of Kato-Ponce estimate \cite{KaPo88}
\begin{equation}\label{eq:KatoPonce}
  \| \Lambda^{n}(fg)- f\Lambda^{n}g\|_{L^2}\le C \|\d_x
  f\|_{L^\infty} \|g\|_{H^{n-1}}+ \|f\|_{H^{n}}\|g\|_{L^\infty},
\end{equation}
we have (with $f=w$ and $g=\d_x w$)
\begin{align*}
\|w\d_x \Lambda^n w- \Lambda^n\(w\d_x w\)\|_{L^2}   \le C \|\d_x
w\|_{L^\infty} \|w\|_{H^n}.
\end{align*}
Leaving out the viscous term, Gronwall lemma yields the \emph{a priori} estimate
\begin{equation}
  \label{eq:aprioriHn}
  \|w(t)\|_{H^n}\le \|w_0\|_{H^n}\exp{\(
    C\int_0^t\|\d_xw(s)\|_{L^\infty}ds\)},
\end{equation}
where $C$ depends only on $n\in \N$. 
In particular, for $n=2$,
Gronwall lemma implies $\|\d_x^2 w(t)\|_{L^2}\le \|w_0\|_{H^2}
e^{C(t^{5/8}+t)}$, where $C=C(\varepsilon,\|w_0\|_{H^1})$. We bootstrap, thanks to
Gagliardo--Nirenberg inequality again:  
\begin{equation*}
  \|\d_x w(t)\|_{L^\infty} \le \sqrt 2 
  \|\d_x w(t)\|_{L^2}^{1/2} \|\d_x^2 w(t)\|_{L^2}^{1/2} \le \sqrt
  2\|w_0\|_{H^2}  e^{C(t^{5/8}+t)}. 
\end{equation*}
Therefore, for $t\in [0,T]$,
\begin{equation*}
  \int_0^t\|\d_x w(s)\|_{L^\infty} ds \le \sqrt
  2 \|w_0\|_{H^2} \times t \times e^{C(T^{5/8}+T)}. 
\end{equation*}
The last estimates of the proposition then follow from \eqref{eq:aprioriHn}.
\end{proof}

\begin{lemma}\label{estimnl1}
Let  $T>0$. For all $w \in H^1(\R)$,
there exists $C=C(T,\|w\|_{L^2(\R)})$ such that
\begin{equation}
\|Y^t w \|_{H^1(\R)} \le e^{Ct}\|w\|_{H^1(\R)},\quad\forall t\in [0,T].
\label{estmnle1}
\end{equation}
\end{lemma}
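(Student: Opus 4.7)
Both bounds are essentially already contained in Proposition~\ref{prop:vb}. The $L^2$ estimate $\|Y^t w\|_{L^2}\le \|w\|_{L^2}$ is \eqref{estimapriori1}, so only $\|\d_x Y^t w\|_{L^2}$ needs attention. Starting from the integrated $H^1$ energy inequality \eqref{eq:ineqH1} derived in the proof of Proposition~\ref{prop:vb}, Gronwall's lemma yields
\[
\|\d_x Y^t w\|_{L^2}^2\le \|\d_x w\|_{L^2}^2\exp\left(\frac{1}{2}\int_0^t \|\d_x Y^s w\|_{L^\infty}\,ds\right).
\]
The task therefore reduces to bounding the integral in the exponent by $Ct$, with a constant $C$ depending only on $T$ and $\|w\|_{L^2}$.

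For this I recycle the estimate already carried out in Proposition~\ref{prop:vb}: the Gagliardo--Nirenberg inequality $\|\d_x w\|_{L^\infty}\le \sqrt{2}\,\|\d_x w\|_{L^2}^{1/2}\|\d_x^2 w\|_{L^2}^{1/2}$, combined with the interpolation \eqref{eq:cachan92}, H\"older's inequality in time, and the parabolic smoothing estimate \eqref{eq:L2H2}
\[
\varepsilon\int_0^t\|\d_x^2 Y^s w\|_{L^2}^2\, ds\le \|\d_x w\|_{L^2}^2 + C(\|w\|_{L^2})\,t,
\]
yields a bound of the form $\int_0^t\|\d_x Y^s w\|_{L^\infty}\,ds \le C(\varepsilon,T,\|w\|_{H^1})(t^{5/8}+t)$. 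Since this $H^1$ dependence enters only through the $\|\d_x w\|_{L^2}^2$ sitting inside \eqref{eq:L2H2}, it can be folded into the multiplicative prefactor $\|w\|_{H^1}$ appearing on the right-hand side of \eqref{estmnle1}. Summing the $L^2$ and $\dot H^1$ bounds then gives $\|Y^t w\|_{H^1}^2\le \|w\|_{H^1}^2\exp(2C(t^{5/8}+t))$.

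To match the precise form $e^{Ct}$ required by \eqref{estmnle1}, note that on $[0,T]$ the sublinear term $t^{5/8}$ is bounded by $T^{5/8}$, so after enlarging the constant the exponent $2C(t^{5/8}+t)$ is dominated by $\widetilde Ct$, where the discrepancy at the endpoint $t=0$ is harmless thanks to the trivial equality $\|Y^0 w\|_{H^1}=\|w\|_{H^1}$. The main obstacle is precisely this bookkeeping step: ensuring that the constant $\widetilde C=\widetilde C(T,\|w\|_{L^2})$ in the exponent depends only on $T$ and $\|w\|_{L^2}$, and not also on $\|w\|_{H^1}$. This requires careful accounting of how the $\|\d_x w\|_{L^2}^2$ contribution coming from \eqref{eq:L2H2} is absorbed into the multiplicative prefactor, rather than being allowed to pollute the exponent.
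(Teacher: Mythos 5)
There is a genuine gap, and it sits exactly at the point you flag as ``the main obstacle'': the $\|w\|_{H^1}$-dependence coming from \eqref{eq:L2H2} cannot be ``folded into the multiplicative prefactor''. Following your route, Gronwall applied to \eqref{eq:ineqH1} gives
\begin{equation*}
\|\d_x Y^t w\|_{L^2}^2\le \|\d_x w\|_{L^2}^2\exp\Bigl(\tfrac12\int_0^t\|\d_x Y^s w\|_{L^\infty}\,ds\Bigr),
\end{equation*}
and the chain \eqref{eq:cachan92}--H\"older--\eqref{eq:L2H2} bounds the exponent by
\begin{equation*}
C\bigl(\eps,\|w\|_{L^2}\bigr)\bigl(\|\d_x w\|_{L^2}^2+C(\|w\|_{L^2})\,t\bigr)^{3/8}t^{5/8}\le C\|\d_x w\|_{L^2}^{3/4}t^{5/8}+Ct .
\end{equation*}
The resulting factor $\exp\bigl(C\|\d_x w\|_{L^2}^{3/4}t^{5/8}\bigr)$ is exponential in a power of $\|\d_x w\|_{L^2}$: for fixed $t>0$ it grows faster than any fixed power of $\|w\|_{H^1}$, so it can be absorbed neither into the linear prefactor $\|w\|_{H^1}$ nor into a constant $C(T,\|w\|_{L^2})$ in the exponent. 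This is not mere bookkeeping: the energy method closes the cubic term $\int(\d_x w)^3$ only through $\|\d_x w\|_{L^\infty}$, hence through $\d_x^2 w$ and the smoothing estimate \eqref{eq:L2H2}, which unavoidably reinjects $\|\d_x w\|_{L^2}$ into the exponent. That is precisely why Proposition~\ref{prop:vb} is stated with $C=C(\eps,\|w_0\|_{H^1})$ and why the paper proves Lemma~\ref{estimnl1} by a different argument; the sharper dependence matters downstream, since Corollary~\ref{cor:condest} and the induction in Section~\ref{sec:proof} need the constant in the exponent to depend on $T$ and $\|w\|_{L^2}$ only.

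The paper's proof instead differentiates the Duhamel formula \eqref{burgerduhamel}, uses $\|\d_x G(t-s)\|_{L^2}\le C_0(t-s)^{-3/4}$ from Proposition~\ref{heat}, and bounds the nonlinearity by $\|(Y^sw)\,\d_x(Y^sw)\|_{L^1}\le\|w\|_{L^2}\|\d_x Y^sw\|_{L^2}$ via Cauchy--Schwarz and \eqref{estimapriori1}; the point is that the unknown $\|\d_x Y^sw\|_{L^2}$ then enters \emph{linearly}, with a coefficient controlled by $\|w\|_{L^2}$ alone, and the singular kernel $(t-s)^{-3/4}$ is handled by the fractional Gronwall Lemma~\ref{lemme1} with $\theta=1/4$. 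To salvage your argument you would need an analogous device that keeps $\|\d_x w\|_{L^2}$ out of the exponent altogether.
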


\begin{proof}
Differentiating the Duhamel formula \eqref{burgerduhamel} in space, we have
\begin{equation*}
\partial_x Y^t w = G(t, \cdot) \ast \partial_x w - \int_0^t \partial_x
G(t-s, \cdot) \ast (Y^s w) \partial_x (Y^s w) \, ds. 
\end{equation*}
Using Young inequality and inequality \eqref{estimapriori1}, we infer: 
\begin{align*}
  \|\d_x Y^t w \|_{L^2(\R)}& \le \|\partial_x w\|_{L^2(\R)} \\
&\quad +
  \int_0^t \| \partial_x G(t-s, \cdot)\|_{L^2(\R)} \|(Y^s w) \partial_x
  (Y^s w)\|_{L^1(\R)} \, ds.  
\end{align*}
In view of Proposition~\ref{heat}, this implies:
\begin{align*}
  \|\d_x Y^t w \|_{L^2(\R)}& \le \|\partial_x w\|_{L^2(\R)} \\
&\quad +
  C_0\int_0^t (t-s)^{-3/4}\|(Y^s w) \partial_x
  (Y^s w)\|_{L^1(\R)} \, ds.  
\end{align*}
Writing
\begin{align*}
  \|(Y^s w) \partial_x
  (Y^s w)\|_{L^1}\le \|Y^s w\|_{L^2}\|\d_x(Y^s
w)\|_{L^2}\le \|w\|_{L^2}\|\d_x(Y^s
w)\|_{L^2}, 
\end{align*}
and invoking the fractional Gronwall Lemma~\ref{lemme1} with $\theta
=1/4$, the lemma follows.  
\end{proof}

\begin{corollary}\label{cor:condest}
  Let $n\ge 1$ and $w\in H^n(\R)$.  Let $T,\alpha>0$. If 
  \begin{equation*}
    \|w\|_{L^2(\R)}\le \alpha,
  \end{equation*}
then there exists $c$ depending only on $T$ and $\alpha$ such that
\begin{equation*}
  \|Y^t w\|_{H^n(\R)}\le e^{c t}\|w\|_{H^n(\R)},\quad \forall t\in [0,T]. 
\end{equation*}
\end{corollary}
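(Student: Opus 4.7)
The plan is to split into the two cases $n=1$ and $n\ge 2$ and in each case reduce the conclusion to a quantitative bound already established in Lemma~\ref{estimnl1} or in the proof of Proposition~\ref{prop:vb}.

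For $n=1$, the statement is exactly Lemma~\ref{estimnl1}, which yields a constant $C=C(T,\|w\|_{L^2(\R)})$. Since the convention fixed in the Notations makes $C$ an increasing function of its arguments and since $\|w\|_{L^2(\R)}\le\alpha$, one has $C(T,\|w\|_{L^2(\R)})\le C(T,\alpha)$, and the choice $c:=C(T,\alpha)$ closes this case.

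For $n\ge 2$, I would start from the $H^n$ a priori estimate \eqref{eq:aprioriHn} obtained inside the proof of Proposition~\ref{prop:vb}, namely
\begin{equation*}
\|Y^t w\|_{H^n(\R)}\le \|w\|_{H^n(\R)}\exp\!\left(C_n\int_0^t \|\d_x Y^s w\|_{L^\infty(\R)}\,ds\right),
\end{equation*}
with $C_n$ depending only on $n$. Everything then reduces to bounding the exponent by $c\,t$ with $c=c(T,\alpha)$. Using \eqref{estimapriori1} we have $\|Y^s w\|_{L^2(\R)}\le\alpha$ uniformly in $s$, so the Gagliardo--Nirenberg inequality combined with the Cauchy--Schwarz bound \eqref{eq:cachan92} applied to $Y^s w$ gives $\|\d_x Y^s w\|_{L^\infty(\R)}\le C\alpha^{1/4}\|\d_x^2 Y^s w\|_{L^2(\R)}^{3/4}$, and H\"older's inequality in time yields
\begin{equation*}
\int_0^t \|\d_x Y^s w\|_{L^\infty(\R)}\,ds\le C\alpha^{1/4}\,t^{5/8}\left(\int_0^t\|\d_x^2 Y^s w\|_{L^2(\R)}^2\,ds\right)^{3/8}.
\end{equation*}

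The main obstacle is to control $\int_0^T\|\d_x^2 Y^s w\|_{L^2(\R)}^2\,ds$ uniformly in $\|w\|_{H^1(\R)}$: the direct energy estimate \eqref{eq:L2H2} still carries $\|w_0'\|_{L^2(\R)}^2$ on the right-hand side, which is not dominated by $\alpha$ alone. I would overcome this by starting the energy identity from a small positive time $\tau>0$ rather than from $0$ and appealing to the parabolic smoothing of the viscous Burgers equation. Specifically, differentiating the Duhamel formula \eqref{burgerduhamel} and using $\|\d_x G(\tau,\cdot)\|_{L^1(\R)}\le C_1\tau^{-1/2}$ from Proposition~\ref{heat} together with a bootstrap argument on $\|Y^\tau w\|_{L^2(\R)}\le\alpha$, one obtains $\|\d_x Y^\tau w\|_{L^2(\R)}\le C(\varepsilon,\alpha)\tau^{-1/2}$ for small $\tau$; substituting this into \eqref{eq:L2H2} applied on $[\tau,T]$ and letting $\tau\to 0^+$ after absorbing the integrable singularity, one arrives at $\int_0^T\|\d_x^2 Y^s w\|_{L^2(\R)}^2\,ds\le C(\varepsilon,T,\alpha)$. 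Plugging this back yields an exponent of size $c(T,\alpha)\,t$ in \eqref{eq:aprioriHn}, which is exactly the conclusion of the corollary.
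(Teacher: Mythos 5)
Your overall architecture is the same as the paper's: the case $n=1$ is Lemma~\ref{estimnl1}, and for $n\ge 2$ you start from the a priori estimate \eqref{eq:aprioriHn} and reduce everything to bounding $\int_0^t\|\d_x Y^s w\|_{L^\infty(\R)}\,ds$ in terms of $T$ and $\alpha$ only. This is precisely the route of the paper's proof, which invokes Lemma~\ref{estimnl1} and then the chain of estimates following \eqref{eq:L2H2} in the proof of Proposition~\ref{prop:vb}. You have also correctly identified the delicate point that the paper glosses over: \eqref{eq:L2H2} carries $\|w_0'\|_{L^2(\R)}^2$ on its right-hand side, so the quoted bound on $\int_0^t\|\d_x w(s)\|_{L^\infty}\,ds$ a priori depends on $\|w\|_{H^1(\R)}$ and not merely on $\alpha$.

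However, the step you introduce to remove that dependence fails. The claimed bound $\int_0^T\|\d_x^2 Y^s w\|_{L^2(\R)}^2\,ds\le C(\varepsilon,T,\alpha)$ is false, already for the linear heat flow: by Plancherel,
\begin{equation*}
\int_0^T\bigl\|\d_x^2 \bigl(G(s,\cdot)\ast w\bigr)\bigr\|_{L^2(\R)}^2\,ds
=\int_{\R}(2\pi\xi)^4\,\frac{1-e^{-8\pi^2\varepsilon T\xi^2}}{8\pi^2\varepsilon\xi^2}\,|\hat w(\xi)|^2\,d\xi
\ \gtrsim\ \frac{1}{\varepsilon}\int_{|\xi|\ge \xi_0}\xi^2|\hat w(\xi)|^2\,d\xi,
\end{equation*}
so this space--time quantity is comparable to $\varepsilon^{-1}\|\d_x w\|_{L^2(\R)}^2$ and cannot be controlled by $\|w\|_{L^2(\R)}\le\alpha$ alone. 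Mechanically, your limiting procedure breaks down because inserting the smoothing bound $\|\d_x Y^\tau w\|_{L^2}\le C(\varepsilon,\alpha)\tau^{-1/2}$ into \eqref{eq:L2H2} on $[\tau,T]$ produces the boundary term $C(\varepsilon,\alpha)^2\tau^{-1}$, which diverges as $\tau\to0^+$: there is no integrable singularity to absorb. What can be salvaged is the weaker but sufficient statement that $\int_0^t\|\d_x Y^s w\|_{L^\infty}\,ds\le C(\varepsilon,T,\alpha)$: keep $\tau_0=\tau_0(\varepsilon,\alpha)$ \emph{fixed}, bound the contribution of $[0,\tau_0]$ directly from the pointwise smoothing rates $\|\d_x Y^s w\|_{L^2}\lesssim \alpha s^{-1/2}$ and $\|\d_x^2 Y^s w\|_{L^2}\lesssim \alpha s^{-1}$ (obtained by the bootstrap on the Duhamel formula that you sketch), which give via \eqref{eq:cachan92} and Gagliardo--Nirenberg $\|\d_x Y^s w\|_{L^\infty}\lesssim \alpha s^{-3/4}$, integrable at $s=0$; then restart \eqref{eq:L2H2} at time $\tau_0$ with datum $Y^{\tau_0}w\in H^1(\R)$ of norm $C(\varepsilon,\alpha)$. (Even so, the resulting exponent is $O(t^{5/8})$ rather than $O(t)$ for small $t$ --- the same slack already present in Proposition~\ref{prop:vb}.) The paper itself does none of this and simply cites the estimate after \eqref{eq:L2H2}, so your instinct that an extra argument is needed at that point is sound; but the particular argument you give does not close.
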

\begin{proof}
  Denote $w(t)=Y^t w$. From Lemma~\ref{estimnl1}, 
  \begin{equation*}
    \|w(t)\|_{H^1(\R)}\le \|w_0\|_{H^1}e^{C t},
  \end{equation*}
where $C$ depends only on $\alpha$ and $T$. 
From the proof of the first part of
  Proposition~\ref{prop:vb} (see 
  the estimate after \eqref{eq:L2H2}), we
  infer
  \begin{equation*}
    \int_0^t\|\d_x  w(s)\|_{L^\infty} ds \le C(\alpha,T),\quad \forall
    t\in [0,T].  
  \end{equation*}
The corollary then stems from \eqref{eq:aprioriHn}. 
\end{proof}

\subsection{Estimates on the splitting operator $Z_L^t$}
Combining the estimates on $X^t$ and $Y^t$ established in the previous
two sections, we infer:
\begin{corollary}\label{cor:split1}
\begin{enumerate}
\item 
For all $u \in L^{2}(\R)$ and all $t>0$,
\begin{equation*}
\|Z_L^t \, u \|_{L^{2}(\R)} \le e^{\beta_0 t} \, \|u\|_{L^{2}(\R)},
\end{equation*}
where $\beta_0 = -\min \RE (\phi_\I)>0$.
\item Let $T>0, n \in \N^*$ and $u \in H^n(\R)$. There exists
  $C=C(T,\|u\|_{L^2(\R)})$ such that for all $t \in [0,T]$,
\begin{equation*}
\|Z^t_L \, u\|_{H^n(\R)} \le e^{Ct}\|u\|_{H^n(\R)}.
\end{equation*}
\end{enumerate}
\end{corollary}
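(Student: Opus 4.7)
The statement is a direct composition of the linear estimates on $X^t$ from Lemma~\ref{lemme3} with the nonlinear estimates on $Y^t$ from \eqref{estimapriori1} and Corollary~\ref{cor:condest}. Since $Z_L^t = X^t Y^t$, the plan in both cases is simply to apply $Y^t$ first (getting a bound that does not grow in $L^2$, and grows at most exponentially in $H^n$) and then to propagate this bound through $X^t$ (which is a Fourier multiplier and hence commutes with the norms).

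For part (1), I would write
\begin{equation*}
\|Z_L^t u\|_{L^2(\R)} = \|X^t(Y^t u)\|_{L^2(\R)} \le e^{\beta_0 t}\|Y^t u\|_{L^2(\R)} \le e^{\beta_0 t}\|u\|_{L^2(\R)},
\end{equation*}
where the first inequality invokes Lemma~\ref{lemme3}(1) with $n=0$, and the second uses the a priori bound \eqref{estimapriori1} on the viscous Burgers flow.

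For part (2), set $\alpha = \|u\|_{L^2(\R)}$. Since $Y^t$ is $L^2$-contractive, the hypothesis $\|u\|_{L^2(\R)}\le \alpha$ of Corollary~\ref{cor:condest} is satisfied, so there is a constant $c=c(T,\alpha)$ with
\begin{equation*}
\|Y^t u\|_{H^n(\R)} \le e^{ct}\|u\|_{H^n(\R)},\quad \forall t\in[0,T].
\end{equation*}
Applying then Lemma~\ref{lemme3}(1) at regularity $n$ gives
\begin{equation*}
\|Z_L^t u\|_{H^n(\R)} \le e^{\beta_0 t}\|Y^t u\|_{H^n(\R)} \le e^{(\beta_0+c)t}\|u\|_{H^n(\R)},
\end{equation*}
and setting $C=\beta_0+c$ (which depends only on $T$ and $\|u\|_{L^2(\R)}$) yields the result.

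There is no real obstacle here: the proof is a two-line composition argument, and all the real work was done in establishing Lemma~\ref{lemme3} and Corollary~\ref{cor:condest}. The only point worth being careful about is the dependence of the constant $C$ on the data in part (2): one must check that it depends only on $T$ and $\|u\|_{L^2(\R)}$, which is ensured by the fact that $Y^t$ preserves the $L^2$ norm of $u$, so that Corollary~\ref{cor:condest} can indeed be applied with $\alpha = \|u\|_{L^2(\R)}$.
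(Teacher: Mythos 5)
Your proof is correct and follows exactly the paper's argument: part (1) composes the $L^2$ bound \eqref{estimapriori1} for $Y^t$ with Lemma~\ref{lemme3}(1), and part (2) composes Corollary~\ref{cor:condest} with Lemma~\ref{lemme3}(1); the paper states this in two lines without writing out the chain of inequalities. (The only cosmetic remark: the hypothesis of Corollary~\ref{cor:condest} is $\|u\|_{L^2}\le\alpha$ for the initial datum itself, so with $\alpha=\|u\|_{L^2}$ it holds trivially and the $L^2$-contractivity of $Y^t$ is not actually needed at that step.)
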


\begin{proof}
The first point is a direct consequence of the relation
\eqref{estimapriori1} and Lemma \ref{lemme3}.\\
The second point is readily established with Lemma~\ref{lemme3} and
Corollary~\ref{cor:condest}. 
\end{proof}

\subsection{Estimates on the exact flow $S^t$}

\begin{lemma}[$L^2$-a priori estimate]\label{estimapriori2}
Let $u_0 \in L^{2}(\R)$ and $T>0$. Then, the unique mild solution $u
\in C([0,T]; L^{2}(\R))\cap C(]0,T]; H^{2}(\R))$ of \eqref{fowlereqn5}
satisfies, for all $t \in [0,T]$  
\begin{equation*}
\|u(t,\cdot) \|_{L^2(\R)} \le e^{\alpha_0 t} \|u_0\|_{L^2(\R)},
\end{equation*}
where $\alpha_0 = -\min \RE(\psi_\I)>0$. 
\end{lemma}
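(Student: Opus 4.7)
The plan is to use the energy method, with the existence result from \cite{AAI10} providing the mild solution $u\in C([0,T];L^2(\R))\cap C(]0,T];H^2(\R))$ whose smoothness for positive times justifies the integrations by parts. For $t_0\in(0,T]$, I would multiply \eqref{fowlereqn5} by $u(t,\cdot)$ and integrate over $\R$ on the interval $[t_0,T]$. The Burgers-type transport term vanishes, since $u\,\partial_x(u^2/2)=\partial_x(u^3/3)$ and $u(t,\cdot)\in H^2(\R)\hookrightarrow C_0(\R)$; integrating the Laplacian by parts gives $-\int_\R u\,\partial_x^2 u\,dx=\|\partial_x u\|_{L^2(\R)}^2$, a nonnegative contribution.

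Next I would handle the nonlocal term via Plancherel's identity and \eqref{fourier}. Writing $\widehat{\I[u]}(\xi)=m(\xi)\hat u(\xi)$ with $m(\xi)=-4\pi^2\Gamma(2/3)\bigl(\tfrac{1}{2}-i\,\mathrm{sgn}(\xi)\tfrac{\sqrt 3}{2}\bigr)|\xi|^{4/3}$, and using that $u$ is real (so $\int u\,\I[u]\,dx$ is real and the odd imaginary part of $m$ drops out), I obtain
\begin{equation*}
\int_\R u\,\I[u]\,dx=\int_\R \RE m(\xi)\,|\hat u(\xi)|^2\,d\xi=-a_{\I}\int_\R |\xi|^{4/3}|\hat u(\xi)|^2\,d\xi.
\end{equation*}
Combined with the Plancherel identity $\int_\R (\partial_x u)^2\,dx=\int_\R 4\pi^2\xi^2|\hat u(\xi)|^2\,d\xi$, this yields the clean energy identity
\begin{equation*}
\frac{1}{2}\frac{d}{dt}\|u(t,\cdot)\|_{L^2(\R)}^2+\int_\R \RE\psi_{\I}(\xi)\,|\hat u(t,\xi)|^2\,d\xi=0.
\end{equation*}

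By the very definition $\alpha_0=-\min_\xi \RE\psi_{\I}(\xi)$, we have $\RE\psi_{\I}(\xi)\ge -\alpha_0$ for every $\xi\in\R$, hence
\begin{equation*}
\frac{d}{dt}\|u(t,\cdot)\|_{L^2(\R)}^2\le 2\alpha_0\,\|u(t,\cdot)\|_{L^2(\R)}^2.
\end{equation*}
Gronwall's lemma on $[t_0,T]$ then gives $\|u(t)\|_{L^2}\le e^{\alpha_0(t-t_0)}\|u(t_0)\|_{L^2}$. Finally, since $u\in C([0,T];L^2(\R))$, letting $t_0\to 0^+$ yields the announced bound. The main (minor) obstacle is precisely the justification at $t=0$: one has $H^2$ regularity only for strictly positive times, so the energy computation must be carried out on $[t_0,T]$ and the endpoint $t_0=0$ recovered by $L^2$ continuity.
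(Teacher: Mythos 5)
Your proposal is correct and follows essentially the same route as the paper: multiply by $u$, observe that the Burgers term integrates to zero, convert $\int_\R u\,(\I[u]-\partial_x^2u)\,dx$ into $\int_\R \RE\psi_\I(\xi)\,|\hat u(\xi)|^2\,d\xi$ via Plancherel and \eqref{fourier}, bound below by $-\alpha_0\|u\|_{L^2}^2$, and conclude with Gronwall. Your extra care in working on $[t_0,T]$ and passing to the limit $t_0\to0^+$ using $L^2$-continuity is a reasonable refinement of the formal computation the paper presents, not a different argument.
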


\begin{proof}
Multiplying  \eqref{fowlereqn5} by
$u$ and integrating with respect to the space variable, we get:
\begin{equation*} 
\int_{\R} u_{t}u \: dx+ \int_{\R} \left(\I[u]-u_{xx}\right)u \: dx =0
\end{equation*}
because the nonlinear term is zero. 
Using \eqref{fourier} and the fact that $u$ and $\int_\R
(\I[u]-\partial_{xx}^2 u) u \, dx$ are real, we get 
\begin{equation*} 
\int_\R (\I[u]-\partial_{xx}^2 u) u \, dx = \int_\R \F^{-1}(\psi_\I
\F u) u \, dx = \int_\R \psi_\I |\F u|^2 \, d\xi
 = \int_\R \mbox{Re}(\psi_\I) |\F u|^2 \, d\xi.
\end{equation*} 
We infer
\begin{eqnarray*}
\frac{1}{2} \frac{d}{dt}\| u(t, \cdot) \|_{L^2}^{2} & \le &
\alpha_{0} \| u(t) \|_{L^{2}}^{2}  
\end{eqnarray*}
where $\alpha_0=-\min \RE (\psi_{\I})>0$. The result then follows from
the Gronwall lemma. 
\end{proof}

\begin{lemma}\label{lemme8}
 Let $n \in \N^*, $ $u_0 \in H^n(\R)$ and
  $T>0$. There exists $C_T(\|u_0\|_{H^{n-1}(\R)})$ such that  the
  unique mild solution $u \in C([0,T]; H^{n}(\R))$ satisfies  
\begin{equation}
\|u(t, \cdot)\|_{H^n(\R)} \le C_T(\|u_0\|_{H^{n-1}(\R)}) \|u_0\|_{H^n(\R)}.
\label{estimfowler}
\end{equation}
\end{lemma}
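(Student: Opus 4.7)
The plan is to argue by induction on $n \ge 1$, mirroring the strategy of the proof of Proposition~\ref{prop:vb} for the viscous Burgers equation.

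For the base case $n=1$, start from the $L^2$ bound $\|u(t)\|_{L^2}\le e^{\alpha_0 t}\|u_0\|_{L^2}$ of Lemma~\ref{estimapriori2}. Differentiating \eqref{fowlereqn5} in $x$, testing against $\d_x u$ and integrating, I would obtain
\begin{equation*}
\frac{1}{2}\frac{d}{dt}\|\d_x u\|_{L^2}^2 + \|\d_x^2 u\|_{L^2}^2 = -\int \d_x u\,\I[\d_x u]\,dx - \frac{1}{2}\int (\d_x u)^3\,dx.
\end{equation*}
The nonlocal term becomes, via \eqref{fourier} and Plancherel, an integral $\sim\int|\xi|^{10/3}|\hat u|^2\,d\xi$; the inequality $|\xi|^{10/3}\le\epsilon|\xi|^4+C_\epsilon$ absorbs it into the parabolic dissipation modulo a harmless $\|u\|_{L^2}^2$. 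The cubic term is treated as in Proposition~\ref{prop:vb}: Gagliardo--Nirenberg together with $\|\d_x u\|_{L^2}^2\le\|u\|_{L^2}\|\d_x^2 u\|_{L^2}$ yields $\|\d_x u\|_{L^3}^3\le C\|u\|_{L^2}^{5/4}\|\d_x^2 u\|_{L^2}^{7/4}$; integrating in time and applying Young's inequality absorbs $\|\d_x^2 u\|^{7/4}$ into the dissipation. I would obtain an $L^2_tH^2_x$ bound $\int_0^t\|\d_x^2 u\|_{L^2}^2\,ds\le C(T,\|u_0\|_{L^2})(\|\d_x u_0\|_{L^2}^2+1)$, and Gagliardo--Nirenberg then gives $\int_0^t\|\d_x u(s)\|_{L^\infty(\R)}\,ds\le C(T,\|u_0\|_{L^2})$. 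A final Gronwall argument on the integrated $H^1$ identity closes the case $n=1$.

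For the induction step at level $n\ge 2$, I would apply $\Lambda^n=(1-\d_x^2)^{n/2}$ to \eqref{fowlereqn5} and test against $\Lambda^n u$. Integration by parts in the parabolic term produces $-\|\Lambda^n\d_x u\|_{L^2}^2$, into which the nonlocal piece is again absorbed via \eqref{fourier} and Plancherel up to $O(\|u\|_{H^n(\R)}^2)$. Splitting $\Lambda^n(u\d_x u)=u\d_x\Lambda^n u+[\Lambda^n,u]\d_x u$, integration by parts handles the first summand by $\|\d_x u\|_{L^\infty(\R)}\|u\|_{H^n(\R)}^2$, while the Kato--Ponce estimate \eqref{eq:KatoPonce} controls the commutator by the same quantity. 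Gronwall yields
\begin{equation*}
\|u(t)\|_{H^n(\R)}^2\le\bigl(\|u_0\|_{H^n(\R)}^2 + CT\|u_0\|_{L^2}^2\bigr)\exp\Bigl(C\int_0^t\|\d_x u(s)\|_{L^\infty(\R)}\,ds\Bigr).
\end{equation*}
For $n\ge 3$, the Sobolev embedding $H^2(\R)\hookrightarrow W^{1,\infty}(\R)$ together with the induction hypothesis gives $\|\d_x u(s)\|_{L^\infty(\R)}\le C\|u(s)\|_{H^{n-1}(\R)}\le C_T(\|u_0\|_{H^{n-2}(\R)})\|u_0\|_{H^{n-1}(\R)}$, so the exponent is controlled by a function of $T$ and $\|u_0\|_{H^{n-1}(\R)}$. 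For $n=2$, the $L^2_tH^2_x$ estimate from the base case combined with Cauchy--Schwarz in time yields $\int_0^t\|\d_x u\|_{L^\infty(\R)}\,ds\le C_T(\|u_0\|_{H^1(\R)})$. In either case one reaches $\|u(t)\|_{H^n(\R)}\le C_T(\|u_0\|_{H^{n-1}(\R)})\|u_0\|_{H^n(\R)}$.

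The main obstacle sits in the base case: the cubic term $\int (\d_x u)^3$ is not controlled by $\|u\|_{H^1(\R)}$ alone and genuinely requires the two-step smoothing bootstrap of Proposition~\ref{prop:vb}, first extracting an integrated $\|\d_x^2 u\|_{L^2}^2$ bound from the parabolic dissipation, and only then feeding it back into a Gronwall loop on $\|\d_x u\|_{L^2}^2$. By contrast, the nonlocal operator $\I$ is comparatively benign at every level, since its order $4/3<2$ lets its anti-dissipative contribution be swallowed by an $\epsilon$-fraction of the Laplacian dissipation modulo a lower-order $L^2$ remainder.
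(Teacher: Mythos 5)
Your strategy is genuinely different from the paper's. The paper runs no energy estimates on the Fowler equation at all: it differentiates the Duhamel formula \eqref{duhamel5} in $x$, bounds the nonlinear contribution by $\|\d_x K(t-s,\cdot)\|_{L^2(\R)}\,\|u(s)\d_x u(s)\|_{W^{n-1,1}(\R)}\lesssim (t-s)^{-3/4}\|u(s)\|_{H^{n-1}(\R)}\|\d_x u(s)\|_{H^{n-1}(\R)}$ using Proposition~\ref{kernel2}, and closes with the fractional Gronwall Lemma~\ref{lemme1} ($\theta=1/4$) followed by induction on $n$ via the Leibniz rule. Your route --- differentiate the equation, test, absorb $\I$ into the Laplacian via \eqref{fourier} and $|\xi|^{10/3}\le\delta|\xi|^4+C_\delta$, run the smoothing bootstrap of Proposition~\ref{prop:vb} on the cubic term, and invoke Kato--Ponce \eqref{eq:KatoPonce} for $n\ge 2$ --- is structurally sound, and your treatment of the nonlocal term is exactly the right way to see that $\I$ is harmless (order $4/3<2$). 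The mild-solution route buys brevity (it reuses kernel estimates and the fractional Gronwall machinery already in place) and, crucially, a sharper constant; yours buys independence from the kernel bounds and parallels the Burgers analysis already in the paper.

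There is, however, one real discrepancy, at $n=1$. Your claim $\int_0^t\|\d_x u(s)\|_{L^\infty(\R)}\,ds\le C(T,\|u_0\|_{L^2(\R)})$ is not what your own chain of estimates produces: the $L^2_tH^2_x$ bound you derive is $\int_0^t\|\d_x^2u(s)\|_{L^2(\R)}^2\,ds\le C(T,\|u_0\|_{L^2(\R)})\bigl(\|\d_x u_0\|_{L^2(\R)}^2+1\bigr)$, so Gagliardo--Nirenberg and H\"older give $\int_0^t\|\d_x u(s)\|_{L^\infty(\R)}\,ds\le C(T,\|u_0\|_{H^1(\R)})$ --- the dependence on $\|\d_x u_0\|_{L^2(\R)}$ does not disappear. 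The final Gronwall step then yields $\|u(t)\|_{H^1(\R)}\le C_T(\|u_0\|_{H^1(\R)})\|u_0\|_{H^1(\R)}$ rather than the stated $C_T(\|u_0\|_{L^2(\R)})\|u_0\|_{H^1(\R)}$: the constant multiplying $\|u_0\|_{H^1(\R)}$ depends on $\|u_0\|_{H^1(\R)}$ itself. This is exactly the loss Proposition~\ref{prop:vb} accepts for the viscous Burgers equation (there the exponent is $C(t^{5/8}+t)$ with $C=C(\eps,\|w_0\|_{H^1(\R)})$), and it is precisely what the Duhamel formulation avoids: in \eqref{duhamel5} the nonlinearity is estimated by $\|u\|_{L^2(\R)}\|\d_x u\|_{L^2(\R)}$, which is linear in the top norm with a coefficient controlled by $\|u_0\|_{L^2(\R)}$ alone via Lemma~\ref{estimapriori2}, so the fractional Gronwall lemma delivers the advertised $C_T(\|u_0\|_{L^2(\R)})$. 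For $n\ge 2$ your constants do match the statement, since there $\int_0^t\|\d_x u(s)\|_{L^\infty(\R)}\,ds$ is legitimately controlled in terms of $\|u_0\|_{H^1(\R)}\le\|u_0\|_{H^{n-1}(\R)}$; and since only the cases $n\ge 2$ are used elsewhere in the paper, your argument would suffice for the applications, but it does not prove the lemma as stated for $n=1$.
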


\begin{proof}
The proof is similar to the one given in Lemma \ref{estimnl1}.
Differentiating the Duhamel formula \eqref{duhamel5} in space, we have
\begin{equation*}
\partial_x u = K(t, \cdot) \ast \partial_x u_0 - \int_0^t \partial_x
K(t-s, \cdot) \ast \(u(s) \partial_x u(s)\) \, ds. 
\end{equation*}
Using Young inequality and Proposition~\ref{kernel2}, we infer,
for any integer $n\ge 1$: 
\begin{align*}
  \|\d_x u(t) \|_{H^{n-1}(\R)}& \le e^{\alpha_0 t}\|\partial_x u_0\|_{H^{n-1}(\R)} \\
&\quad + 
  \int_0^t \| \partial_x K(t-s, \cdot)\|_{L^2(\R)} \|u(s) \partial_x
  u(s)\|_{W^{n-1,1}(\R)} \, ds.  
\end{align*}
In view of Proposition~\ref{kernel2}, this implies:
\begin{align*}
  \|\d_x u(t) \|_{H^{n-1}(\R)}& \le e^{\alpha_0 t}\|\partial_x u_0\|_{H^{n-1}(\R)} \\
&\quad +
  C_0\int_0^t (t-s)^{-3/4}\|u(s) \partial_x
  u(s)\|_{W^{n-1,1}(\R)} \, ds.  
\end{align*}
For $n=1$, we use Lemma~\ref{estimapriori2} to have
\begin{align*}
  \|u(s) \partial_x
  u(s)\|_{L^1}\le \|u(s)\|_{L^2}\|\d_x u(s)\|_{L^2}\le e^{\alpha_0 T}
  \|u_0\|_{L^2}\|\d_x u(s)\|_{L^2}. 
\end{align*}
The fractional Gronwall Lemma~\ref{lemme1} with $\theta =1/4$ then yields
\begin{equation*}
\|\partial_x u(t)\|_{L^{2}(\R)} \le  e^{Ct+CT}
\|\partial_x u_0\|_{L^{2}(\R)} ,
\end{equation*}
where $C$ depends only on $T$ and $\|u_0\|_{L^2}$. 
From \eqref{estimapriori1}, this implies the lemma in the case $n=1$. 
For $n\ge 1$, Leibniz rule and Cauchy--Schwarz inequality yield
\begin{equation*}
  \|u(s) \partial_x
  u(s)\|_{W^{n-1,1}(\R)} \le C(n)\|u(s)\|_{H^{n-1}(\R)}\|\partial_x
  u(s)\|_{H^{n-1}(\R)}.
\end{equation*}
The lemma then easily follows by induction on $n$. 
\end{proof}

We will also need the fact that the flow map $S^t$ is
uniformly Lipschitzean on balls of $H^2(\R)$.
\begin{proposition}\label{prop:lipschitz}
  Let $T,R>0$. There exists $K=K(R,T)<\infty$ such that if
  \begin{equation*}
    \|u_0\|_{H^2(\R)} \le R, \, \|v_0\|_{H^2(\R)} \le R,
  \end{equation*}
then 
\begin{equation*}
  \|S^t u_0-S^tv_0\|_{L^2(\R)}\le K\|u_0-v_0\|_{L^2(\R)},\quad \forall
  t\in [0,T].
\end{equation*}
\end{proposition}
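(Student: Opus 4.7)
The plan is to linearize around the difference $w(t) = S^t u_0 - S^t v_0$, obtain a Duhamel representation for $w$, and close a fractional Gronwall inequality.

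First, subtracting the two Fowler equations satisfied by $u(t) = S^t u_0$ and $v(t) = S^t v_0$, and using the identity $u^2 - v^2 = (u+v)(u-v)$, one sees that $w$ solves
\begin{equation*}
\partial_t w + \frac{1}{2}\partial_x\bigl((u+v)w\bigr) + \I[w] - \partial_x^2 w = 0,\quad w(0,\cdot)= u_0-v_0.
\end{equation*}
Duhamel's formula (mirroring \eqref{duhamel5}) then gives
\begin{equation*}
w(t) = K(t,\cdot)\ast(u_0-v_0) - \frac{1}{2}\int_0^t \partial_x K(t-s,\cdot)\ast\bigl((u(s)+v(s))w(s)\bigr)\,ds.
\end{equation*}

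Next I would take the $L^2$ norm and use Young's inequality in the $(L^1, L^2, L^2)$ form together with Proposition~\ref{kernel2}(5) for the linear part and Proposition~\ref{kernel2}(4) for the nonlinear part, obtaining
\begin{equation*}
\|w(t)\|_{L^2} \le e^{\alpha_0 t}\|u_0-v_0\|_{L^2} + \frac{C_T}{2}\int_0^t (t-s)^{-1/2}\bigl\|(u(s)+v(s))w(s)\bigr\|_{L^2}\,ds.
\end{equation*}
For the pointwise product, I would use $\|(u+v)w\|_{L^2} \le \|u+v\|_{L^\infty}\|w\|_{L^2}$ together with the Sobolev embedding $H^1(\R)\hookrightarrow L^\infty(\R)$. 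Lemma~\ref{lemme8} applied at level $n=2$ to $u_0,v_0$ (whose $H^2$-norms are bounded by $R$) provides a uniform bound
\begin{equation*}
\sup_{t\in[0,T]}\bigl(\|S^t u_0\|_{H^2} + \|S^t v_0\|_{H^2}\bigr) \le C_T(R),
\end{equation*}
so in particular $\|u(s)+v(s)\|_{L^\infty}\le M(R,T)$ for all $s\in[0,T]$.

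Inserting this into the previous estimate yields
\begin{equation*}
\|w(t)\|_{L^2} \le e^{\alpha_0 T}\|u_0-v_0\|_{L^2} + C(R,T)\int_0^t (t-s)^{-1/2}\|w(s)\|_{L^2}\,ds,
\end{equation*}
which fits exactly the hypothesis of the fractional Gronwall Lemma~\ref{lemme1} with $\theta=1/2$. That lemma then produces a constant $K=K(R,T)$ such that
\begin{equation*}
\|S^t u_0 - S^t v_0\|_{L^2(\R)} \le K\|u_0-v_0\|_{L^2(\R)},\quad \forall t\in[0,T],
\end{equation*}
as required. There is no real obstacle here: the only point that needs some care is to ensure that the $L^\infty$-bound on $u+v$ is uniform on $[0,T]$, which is precisely what the $H^2$-hypothesis on $u_0,v_0$ together with Lemma~\ref{lemme8} delivers; everything else is a standard Duhamel plus fractional Gronwall argument already developed in Section~\ref{sectionlemme}.
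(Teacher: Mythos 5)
Your proof is correct, but it follows a genuinely different route from the paper. The paper works directly with the strong/energy formulation: it writes the equation for $w=u-v$ as $\d_t w+\I[w]-\d_x^2 w=-u\d_x w-w\d_x v$, multiplies by $w$, handles $\int w(\I[w]-\d_x^2 w)$ exactly as in Lemma~\ref{estimapriori2}, integrates by parts the transport term, and closes with the \emph{classical} Gronwall lemma; this requires controlling $\|\d_x u\|_{L^\infty}$ and $\|\d_x v\|_{L^\infty}$, hence the $H^2$ hypothesis via Lemma~\ref{lemme8} and Sobolev embedding. You instead stay at the level of the mild (Duhamel) formulation, put the derivative of the nonlinear term onto the kernel, invoke the $L^1$ bound $\|\d_x K(t,\cdot)\|_{L^1}\le C_T t^{-1/2}$ from Proposition~\ref{kernel2}, and close with the fractional Gronwall Lemma~\ref{lemme1} with $\theta=1/2$ --- precisely the pattern the paper uses for Lemma~\ref{estimnl1} and Lemma~\ref{lemme8}, just not here. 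Your version only needs $\|u+v\|_{L^\infty}$, i.e.\ a uniform $H^1$ bound on the flows (so it would in fact prove the Lipschitz property on balls of $H^1(\R)$, a slightly stronger statement), and it avoids justifying the energy computation for mild solutions; the paper's version is more elementary, avoids the singular-kernel integral and the fractional Gronwall machinery, and matches the style of its other a priori estimates. Both arguments are complete and yield the stated constant $K=K(R,T)$.
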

\begin{proof}
  Set $u(t)=S^tu_0$, $v(t)=S^t v_0$ and $w=u-v$. It solves
  \begin{equation}\label{eq:lip}
    \d_t w +\I[w]-\partial_{xx}^2 w = v\d_x v-u\d_x u = -u\d_x w -
    w\d_x v. 
  \end{equation}
The $L^2$ energy estimate yields:
\begin{equation*}
  \frac{1}{2}\frac{d}{dt}\|w\|_{L^2}^2 +\int_{\R}w \(u\d_x w +w\d_x
  v\)\le \alpha_0 \|w(t)\|_{L^2}^2,
\end{equation*}
where the  term $\int w\(\I[w]-\partial_{xx}^2 w\)$ has been estimated
as in the proof of Lemma~\ref{estimapriori2}. We infer
\begin{align*}
  \frac{1}{2}\frac{d}{dt}\|w(t)\|_{L^2}^2 &\le \(\alpha_0+
  \frac{1}{2}\|\d_xu(t) \|_{L^\infty} +\|\d_x
  v(t)\|_{L^\infty}\)\|w(t)\|_{L^2}^2\\
&\le C\(1+\|u(t)\|_{H^2}+\|v(t)\|_{H^2}\) \|w(t)\|_{L^2}^2\le C(R,T) \|w(t)\|_{L^2}^2,
\end{align*}
where we have used Sobolev embedding and Lemma~\ref{lemme8}. Gronwall
lemma yields the result. 
\end{proof}

\section{Local error estimate}\label{errorlocal}

As pointed out in the introduction, the formalism of Lie derivatives
does not seem to be easy to use in the framework of this paper, since
the action of the nonlocal operator $\I$ on nonlinearities is rather
involved. As a consequence, we prove an $L^2$ error estimate by a
rather pedestrian way. 

\begin{proposition}[$L^2$ local error estimate]\label{localerror}
Let $u_0 \in H^3(\R)$. There exists $C\left(\|u_0\|_{L^2(\R)} \right)$
such that for all $t \in [0,1]$,
\begin{equation*}
\|Z^t_L u_0 - S^t u_0 \|_{L^{2}(\R)} \le C\left(\|u_0\|_{L^2(\R)}
\right)   t^2\,  \lVert u_0\rVert_{H^3(\R)}^2. 
\end{equation*}
\end{proposition}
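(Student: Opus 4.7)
The plan is to compare $Z_L^t u_0 = X^t Y^t u_0$ to $S^t u_0$ via a modified Duhamel formula. I apply the convolution $D(t,\cdot)\ast\,$ to \eqref{burgerduhamel}, use Remark~\ref{remarkconvolution} ($D(t,\cdot)\ast G(t,\cdot)=K(t,\cdot)$) and the semigroup property $D(t,\cdot)=D(s,\cdot)\ast D(t-s,\cdot)$, to get
\begin{equation*}
Z_L^t u_0 = K(t,\cdot)\ast u_0 - \frac{1}{2}\int_0^t D(s,\cdot)\ast \partial_x K(t-s,\cdot)\ast (Y^s u_0)^2\, ds.
\end{equation*}
Subtracting \eqref{duhamel5} and inserting $\pm \tfrac{1}{2}\int_0^t \partial_x K(t-s,\cdot)\ast (Y^s u_0)^2\,ds$ leads to the decomposition $Z_L^t u_0 - S^t u_0 = A(t)+B(t)$, with
\begin{align*}
A(t) &:= -\frac{1}{2}\int_0^t (X^s-I)\bigl[\partial_x K(t-s,\cdot)\ast (Y^s u_0)^2\bigr]\,ds, \\
B(t) &:= \frac{1}{2}\int_0^t \partial_x K(t-s,\cdot)\ast \bigl[(S^s u_0)^2-(Y^s u_0)^2\bigr]\,ds.
\end{align*}

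The term $A(t)$ is the easier one: Lemma~\ref{lemme3} gives $\|(X^s-I)v\|_{L^2}\lesssim s\,\|v\|_{H^2}$ for $s\in[0,1]$, while moving the $\partial_x$ onto the nonlinearity yields $\|\partial_x K(t-s,\cdot)\ast g\|_{H^2}=\|K(t-s,\cdot)\ast \partial_x g\|_{H^2}\lesssim \|g\|_{H^3}$ by Proposition~\ref{kernel2}(5). Combining with Corollary~\ref{cor:condest} for $n=3$ and the $H^3$-algebra property to control $\|(Y^s u_0)^2\|_{H^3}\le C\|Y^s u_0\|_{H^3}^2\le C\|u_0\|_{H^3}^2$, one obtains $\|A(t)\|_{L^2}\le C t^2 \|u_0\|_{H^3}^2$.

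For $B(t)$, the key is also to move the derivative from $K$ onto the nonlinear factor: Proposition~\ref{kernel2}(5) gives
\begin{equation*}
\|B(t)\|_{L^2}\le C\int_0^t \|\partial_x[(S^s u_0 - Y^s u_0)(S^s u_0+Y^s u_0)]\|_{L^2}\,ds,
\end{equation*}
and Leibniz together with $H^1(\R)\hookrightarrow L^\infty(\R)$ reduces this to an integral of $\|S^s u_0 - Y^s u_0\|_{H^1}\|S^s u_0 + Y^s u_0\|_{H^1}$. Since $Y^s u_0$ and $S^s u_0$ are uniformly bounded in $H^1$ for $s\in[0,1]$ (Lemma~\ref{lemme8} and Corollary~\ref{cor:condest}), everything reduces to proving $\|S^s u_0 - Y^s u_0\|_{H^1}\le Cs$.

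This last point is the main obstacle. Setting $r=Y^s u_0 - S^s u_0$, subtraction of the two evolution equations gives
\begin{equation*}
\partial_s r - \eps\, \partial_x^2 r = -\partial_x\bigl(\tfrac{(Y^s u_0+S^s u_0) r}{2}\bigr) - \eta\, \partial_x^2 S^s u_0 + \I[S^s u_0],\quad r(0)=0.
\end{equation*}
The forcing on the right-hand side is uniformly bounded in $H^1$ for $s\in[0,1]$, thanks to Proposition~\ref{estimnonlocal}, Lemma~\ref{lemme8} and Corollary~\ref{cor:condest} (which ensure that $\|S^s u_0\|_{H^3}$ and $\|Y^s u_0\|_{H^3}$ are controlled by $\|u_0\|_{H^3}$). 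Writing $r$ in mild form with the heat semigroup of kernel $G$, and using the smoothing bound $\|\partial_x G(t,\cdot)\|_{L^1}\lesssim t^{-1/2}$ from Proposition~\ref{heat}(5) together with the $H^1$-algebra on the transport term, one obtains an inequality of the form
\begin{equation*}
\|r(s)\|_{H^1}\le Cs + C\int_0^s (s-\sigma)^{-1/2}\|r(\sigma)\|_{H^1}\,d\sigma.
\end{equation*}
The modified fractional Gronwall Lemma~\ref{lemme2} (with $\theta=1/2$ and $P(s)=Cs$) then gives $\|r(s)\|_{H^1}\le Cs$, and plugging this back yields $\|B(t)\|_{L^2}\le C t^2\|u_0\|_{H^3}^2$. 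Note that a pure $L^2$-energy approach on $r$ would only give $\|r(s)\|_{L^2}=O(s)$ and then $\|B(t)\|_{L^2}=O(t^{3/2})$ through $\|\partial_x K(t-s,\cdot)\|_{L^1}\lesssim (t-s)^{-1/2}$; the smoothing in the $H^1$ analysis of $r$ is what recovers the optimal $t^2$ rate.
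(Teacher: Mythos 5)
Your proof is correct in its overall structure and yields the $O(t^2)$ local error, but it follows a genuinely different route from the paper's. The paper keeps the difference self-referential: it writes $Z_L^tu_0-S^tu_0=\tfrac12\int_0^t\partial_xK(t-s)\ast\bigl((S^su_0)^2-(Z_L^su_0)^2\bigr)ds+R(t)$, bounds the remainder $R$ by a four-term telescoping decomposition ($K\ast{}$ vs.\ identity, $G\ast{}$ vs.\ identity, $D\ast{}$ vs.\ identity, and $Z_L^s$ vs.\ $Y^s$), each of size $O(t)\|u_0\|_{H^3}^2$, and then closes the loop by applying the modified fractional Gronwall Lemma~\ref{lemme2} with $\theta=1/4$ to $\phi(t)=\|Z_L^tu_0-S^tu_0\|_{L^2}$ itself. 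You instead split into $A(t)$ (the commutator-type term $(X^s-I)$ acting on the Duhamel integrand, handled directly) and $B(t)$, and for $B(t)$ you introduce a new ingredient absent from the paper: the estimate $\|Y^su_0-S^su_0\|_{H^1}\le Cs$, proved by writing the equation for $r=Y^su_0-S^su_0$ in mild form and running the fractional Gronwall lemma at the level of $r$ in $H^1$ (with $\theta=1/2$) rather than at the level of the splitting error in $L^2$. Your observation that an $L^2$-only treatment of $r$ would lose half a power of $t$ is accurate, and the $H^1$ smoothing argument does recover the full rate. What the paper's route buys is that it never needs to compare $Y^s$ with $S^s$ directly, so all constants are controlled by $\|u_0\|_{L^2}$ alone (via Lemma~\ref{estimapriori2}, Corollary~\ref{cor:split1} and Corollary~\ref{cor:condest}); what your route buys is a more transparent two-term decomposition and no self-referential Gronwall on the quantity being estimated.

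One caveat you should flag: your argument does not quite deliver the constant in the form stated. The forcing $-\eta\,\partial_x^2S^su_0+\I[S^su_0]$ in the equation for $r$ must be bounded in $H^1$, which requires $\|S^su_0\|_{H^3}$; Lemma~\ref{lemme8} controls this only as $C_T(\|u_0\|_{H^2})\|u_0\|_{H^3}$. Consequently your final bound reads $C(\|u_0\|_{H^2})\,t^2\,\|u_0\|_{H^3}^2$ rather than $C(\|u_0\|_{L^2})\,t^2\,\|u_0\|_{H^3}^2$. This weaker dependence is still sufficient for the induction in the proof of Theorem~\ref{theoreme1} (where $\|u_k\|_{H^3}$, hence $\|u_k\|_{H^2}$, is bounded uniformly along the iteration), but strictly speaking it proves a slightly weaker proposition than the one stated; if you want the constant to depend only on $\|u_0\|_{L^2}$, you would need either to sharpen the $H^3$ a priori estimate on $S^t$ or to revert to the paper's self-referential decomposition for the quadratic difference.
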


\begin{proof}
From the definition of $Z^t_L$ and Remark~\ref{remarkconvolution}, 
we have
\begin{align}
Z^t_L u_0 &= X^t Y^t u_0 = X^t\left( G(t)\ast u_0 - \frac{1}{2}
  \int_0^t  G(t-s) \ast \partial_x(Y^s u_0)^2 \, ds   \right)
\nonumber \\ 
&= D(t) \ast G(t)\ast u_0 -  \frac{1}{2} \int_0^t D(t) \ast  G(t-s)
\ast \partial_x(Y^s u_0)^2 \, ds 
\nonumber \\ 
&= K(t) \ast u_0 -  \frac{1}{2} \int_0^t D(t) \ast
G(t-s) \ast \partial_x(Y^s u_0)^2 \, ds. 
\label{lieformule}
\end{align}
Thus, from Duhamel formula for the Fowler equation \eqref{duhamel5} and the Lie formula \eqref{lieformule}, we have:
\begin{align}
Z_L^t u_0 - S^t u_0 &=   \frac{1}{2} \int_0^t \partial_x  K(t-s) \ast
(S^s u_0)^2  \, ds  
 - \frac{1}{2}  \int_0^t D(t) \ast \partial_x G(t-s) \ast (Y^su_0)^2
 \, ds  \nonumber \\ 
&= \frac{1}{2} \int_0^t \partial_x K(t-s) \ast \left( (S^s u_0)^2 -
  (Z_L^s u_0)^2 \right) \, ds + R(t),  
\label{difference00}
\end{align}
where the remainder $R(t)$ is written as
\begin{equation*}
R(t) =  \frac{1}{2}\int_0^t R_1(s)  ds,\quad \text{with }R_1(s) =   \partial_x
K(t-s) \ast (Z_L^s u_0)^2  -  D(t) \ast \partial_x
G(t-s, \cdot) \ast (Y^su_0)^2. 
\end{equation*} 
Then, from Proposition~\ref{kernel2}, Corollary~\ref{cor:split1} and
Lemma~\ref{estimapriori2}, we have, for $t\in [0,1]$:
\begin{align*}
&\|Z_L^t  u_0 - S^t u_0\|_{L^2(\R)} \le   \frac{1}{2} \int_0^t
\|\partial_x K(t-s, \cdot) \|_{L^2(\R)} \|(S^s u_0)^2 - (Z_L^s u_0)^2
\|_{L^1(\R)} \, ds  \\
&\phantom{\|Z_L^tu_0 - S^t u_0\|_{L^2(\R)} \le} + \|R(t)\|_{L^2(\R)}  \\ 
&\le  C \int_0^t (t-s)^{-3/4} \|S^s u_0-  Z_L^s
u_0\|_{L^2(\R)} \|S^s u_0 +  Z_L^s u_0\|_{L^2(\R)} \, ds +
\|R(t)\|_{L^2(\R)}  \\
&\le  C(e^{\alpha_0 t}+e^{\beta_0 t})  \|u_0\|_{L^{2}(\R)}
\int_0^t (t-s)^{-3/4} \|S^s u_0-  Z_L^s u_0\|_{L^2(\R)} \, ds + \|R(t)\|_{L^2(\R)},
\end{align*}
where $C$ is a positive constant.
To estimate the remainder, we decompose it as follows
\begin{eqnarray*}
R_1(s) & = & T_1 + T_2 + T_3 + T_4,
\end{eqnarray*}
where 
\begin{align*}
T_1 &= K(t-s, \cdot)\ast \partial_x (Z_L^s u_0)^2 - \partial_x (Z_L^s u_0)^2, \\
T_2 &= \partial_x (Y^s u_0)^2 - G(t-s, \cdot) \ast \partial_x (Y^s u_0)^2, \\
T_3 &= G(t-s, \cdot) \ast \partial_x (Y^s u_0)^2-  D(t, \cdot) \ast
G(t-s, \cdot) \ast \partial_x (Y^s u_0)^2, \\ 
T_4 &= \partial_x\left(Z_L^s u_0 \right)^2 - \partial_x\left( Y^s u_0\right)^2. 
\end{align*} 
Let us first study the term $T_1$. From 
Corollaries~\ref{corokernel} and \ref{cor:split1}, we have (recall
that $t\in [0,1]$)
\begin{align*}
\|T_1\|_{L^2(\R)} &= \|K(t-s, \cdot)\ast \partial_x (Z_L^s u_0)^2
- \partial_x (Z_L^s u_0)^2 \|_{L^2(\R)} \\ 
 &\le  C e^{\alpha_0(t-s)} (t-s) \| \partial_x (Z_L^s u_0)^2\|_{H^2(\R)} \\
&\le  C  e^{\alpha_0(t-s)} (t-s) \|Z_L^s u_0\|_{H^3(\R)}^2 \\
&\le  C\left( \|u_0\|_{L^2(\R)} \right)   e^{\alpha_0(t-s)} (t-s)
\|u_0\|_{H^3(\R)}^2.
\end{align*}
In the same way, from Lemma~\ref{lemmechaleur} and
Corollary~\ref{cor:condest}, we control the term $T_2$ as  
\begin{align*}
\|T_2\|_{L^2(\R)} &= \|\partial_x (Y^su_0)^2 - G(t-s, \cdot)
\ast \partial_x (Y^s u_0)^2\|_{L^2(\R)} \\ 
&\le  \varepsilon \, (t-s) \| \partial_x (Y^s u_0)^2\|_{H^2(\R)}
\le  C\left( \|u_0\|_{L^2(\R)} \right) (t-s) \|u_0\|_{H^3(\R)}^2.
\end{align*}
From Lemma \ref{lemme3} and Corollary~\ref{cor:condest}, 
\begin{align*}
\|T_3\|_{L^2(\R)} &= \|G(t-s, \cdot) \ast \partial_x (Y^su_0)^2-  D(t,
\cdot) \ast G(t-s, \cdot) \ast \partial_x (Y^s u_0)^2\|_{L^2(\R)} \\ 
&\le C \, e^{\beta_0 t} \, t \, \|G(t-s, \cdot) \ast \partial_x (Y^s
u_0)^2\|_{H^2(\R)}\\ 
&\le  C \, e^{\beta_0 t} \, t \, \|\partial_x (Y^s u_0)^2\|_{H^2(\R)} \\
&\le  C \, \left( \|u_0\|_{L^2(\R)} \right) \, e^{\beta_0 t} \, t \,
\|u_0\|_{H^3(\R)}^2. 
\end{align*}
For the term $T_4$, write
\begin{align*}
\|T_4\|_{L^2(\R)} &= \|\partial_x\left(Z_L^s u_0 \right)^2
- \partial_x\left( Y^s u_0\right)^2\|_{L^2(\R)} \\ 
&=  2 \|(Z_L^s u_0)  \partial_x\left(Z_L^s u_0 \right) -  (Y^s
u_0) \partial_x\left(Y^s u_0\right)\|_{L^2(\R)}. 
\end{align*}
By linearity of the evolution operator $X^t$, we have
\begin{equation*}
\partial_x\left(Z_L^s u_0 \right) = X^s \partial_x\left(Y^s u_0\right),
\end{equation*}
hence
\begin{align*}
\|T_4\|_{L^2(\R)} &= 2 \|(Z_L^s u_0)  X^s \partial_x\left(Y^s
  u_0\right) -  (Y^s u_0) \partial_x\left(Y^s u_0\right)\|_{L^2(\R)}
\\ 
&\le  2  \left\|X^s \partial_x\left(Y^s u_0\right)\left(X^s Y^s u_0 - Y^s
  u_0 \right) \right\|_{L^2(\R)}  \\
& \quad+ 2  \left\| \left(Y^s u_0\right)\left( X^s \partial_x\left(Y^s
    u_0\right) - \partial_x\left(Y^s u_0\right) \right)
\right\|_{L^2(\R)}.  
\end{align*}
Now from Sobolev embedding, Lemma~\ref{lemme3} and
Corollary~\ref{cor:condest}, we get: 
\begin{align*}
\|T_4\|_{L^2(\R)}
&\le  2  \|X^s \partial_x\left(Y^s u_0\right) \|_{L^{\infty}(\R)} \|X^s Y^s u_0 - Y^s u_0 \|_{L^2(\R)} \\
&\quad+ 2 \| Y^s u_0\|_{L^{\infty}(\R)} \| X^s \partial_x\left(Y^s
  u_0\right) - \partial_x\left(Y^s u_0\right)\|_{L^2(\R)}\\ 
&\le  C \|X^s \partial_x\left(Y^s u_0\right) \|_{H^{1}(\R)} \,
e^{\beta_0 s} \, s \, \|Y^s u_0 \|_{H^2(\R)} \\  
&\quad+ C \| Y^s u_0\|_{H^{1}(\R)}  \, e^{\beta_0 s} \, s \, 
\| \partial_x\left(Y^s u_0\right)\|_{H^2(\R)} \\
&\le  C \, e^{2\beta_0 s} \, s \, \| Y^s u_0\|_{H^{2}(\R)}^2  +
C \, e^{\beta_0 s} \, s \,  \| Y^s u_0\|_{H^{3}(\R)}^2 \\   
&\le  C(\|u_0\|_{L^2(\R)}) \, e^{2\beta_0 s} \, s \, \|u_0\|_{H^3(\R)}^2.
\end{align*}
Finally, since $R_1(s) = T_1+T_2+T_3+T_4$ then for $0\le s\le t\le 1$,
\begin{equation*}
\|R_1(s)\|_{L^2(\R)} \le   C\left( \|u_0\|_{L^2(\R)} \right) \, t \,
\|u_0\|_{H^3(\R)}^2, 
\end{equation*}
and by integration for $s\in [0,t]$,
\begin{equation*}
\|R(t)\|_{L^2(\R)} \le C\left( \|u_0\|_{L^2(\R)} \right) \, \,
t^2 \, \|u_0\|_{H^3(\R)}^2. 
\end{equation*}
We conclude by applying the modified fractional
Gronwall Lemma~\ref{lemme2}. 
\end{proof}

\section{Proof of Theorem~\ref{theoreme1}}
\label{sec:proof}

The proof follows the same lines as
  in \cite[Section~5]{HLR-p}. Denote by $u_k = \(Z_L^{\Delta
    t}\)^ku_0$ the numerical 
  solution,  and
  \begin{equation*}
    u_n^k = S^{(n-k)\Delta t}u_k,
  \end{equation*}
which corresponds to the exact evolution of the numerical value $u_k$
at time $t_k= k\Delta t$ up to time $t_n=n\Delta t$. From
Lemma~\ref{estimapriori2}, there exists $\rho$ such that 
\begin{equation*}
  \|u(t)\|_{H^2(\R)}\le \rho,\quad \forall t\in [0,T].
\end{equation*}
We prove by induction that there exists $\gamma,\Delta t_0,c>0$ such that if
$0<\Delta t\le \Delta t_0$, for
all $n\in \N$ with $n\Delta t\le T$,
\begin{align*}
  &\|u_n\|_{L^2(\R)}\le 2 \rho,\quad
 \|u_n-u(t_n)\|_{L^2(\R)}\le \gamma \Delta t,\\
&\|u_n\|_{H^3(\R)}\le e^{c n\Delta t}\|u_0\|_{H^3(\R)}\le C_0,
\end{align*}
where $C_0= e^{c T}\|u_0\|_{H^3}$. 
The above properties are satisfied for $n=0$. Let $n\ge 1$, and suppose
that the induction assumption is true for $0\le k\le n-1$. 
Since $u_n=u_n^n$ and $u(t_n)= u_n^0$, we estimate
\begin{align*}
  \|u_{n}-u(t_{n})\|_{L^2}&\le \sum_{k=0}^{n-1}
  \|u_{n}^{k+1}-u_{n}^k\|_{L^2} \\
&\le \sum_{k=0}^{n-1}\left\| S^{(n-k-1)\Delta t}\(Z_L^{\Delta t}u_k\)
  - S^{(n-k-1)\Delta t}\(S^{\Delta t}u_k\)\right\|_{L^2}.
\end{align*}
For $k\le n-2$, $Z_L^{\Delta t}u_k= u_{k+1}$ and
Proposition~\ref{prop:lipschitz} yields, along with the induction
assumption, 
\begin{align*}
  \|S^{\Delta t}u_k\|_{L^2} &\le \|S^{\Delta t}u_k - S^{\Delta
    t}u(t_k)\|_{L^2} +\|S^{\Delta t}u(t_k) \|_{L^2}\\
&\le K\|u_k-u(t_k)\|_{L^2} + \|u(t_{k+1})\|_{L^2} \le
K\gamma \Delta t +\rho,
\end{align*}
which is bounded by $2\rho$ if $0<\Delta t\le \Delta t_0\ll 1$. Up to
replacing $K$ with $\max(K,1)$, we obtain, for $k\le n-1$ and $n\Delta
t\le T$, 
\begin{align*}
  \left\| S^{(n-k-1)\Delta t}\(Z_L^{\Delta t}u_k\)
  - S^{(n-k-1)\Delta t}\(S^{\Delta t}u_k\)\right\|_{L^2}\le K
\|Z_L^{\Delta t}u_k - S^{\Delta t}u_k\|_{L^2}.
\end{align*}
Using Proposition~\ref{localerror}, we infer
\begin{align*}
  \left\| S^{(n-k-1)\Delta t}\(Z_L^{\Delta t}u_k\)
  - S^{(n-k-1)\Delta t}\(S^{\Delta t}u_k\)\right\|_{L^2}\le C K(\Delta
t)^2 e^{2ck\Delta t}\|u_0\|_{H^3}^2,
\end{align*}
for some uniform constant $C$. 
Therefore,
\begin{align*}
  \|u_{n}-u(t_{n})\|_{L^2}&\le nC K(\Delta
t)^2 e^{2cT}\|u_0\|_{H^3}^2\le CTKe^{2cT} \Delta t,
\end{align*}
which yields the first two estimates of the induction,
provided one takes $\gamma = CTKe^{2cT}$, 
which is uniform in $n$ and $\Delta t$. Finally, the last estimate of
the induction stems from Corollary~\ref{cor:split1}.

\section{Numerical Experiments \label{numerique}} 

The aim of this section is to numerically verify the Lie method
convergence rate in $\mathcal{O}\left( \Delta t \right) $ for the
Fowler equation \eqref{fowlereqn5}. \\ 
To solve the linear sub-equation \eqref{nlocal}, discrete Fourier
transform is used  and for the nonlinear sub-equation \eqref{burgers},
different numerical approximations can be used. Here, we use the
finite difference method.\\ 
Since the discrete Fourier transform plays a key role in these
schemes, we briefly review its definition, which can be found in most
books. In some situation, when the mesh nodes number $N$ is chosen to
be $N=2^p$ for some integer $p$, a fast Fourier transform (FFT)
algorithm is used to further decrease the computation time. In this
work we will use a subroutine implemented in Matlab.  In this program,  
the interval $[0,1]$ is discretized by $N$ equidistant points, with
spacing $\Delta x = 1/N$. The spatial grid points are then given by
$x_j = j/N$, $j=0,...,N$. If $u_j(t)$ denotes the approximate
solution to $u(t,x_j)$, the discrete Fourier transform of the sequence
$ \left\lbrace u_j \right\rbrace_{j = 0}^{N-1} $ is defined by 
\begin{equation*}
\hat{u}(k) = \mathcal{F}^d_k (u_j) = \sum_{j=0}^{N-1} u_j e^{-2i \pi j k /N },
\end{equation*}
for $k = 0,\cdots, N-1$, and the inverse discrete Fourier transform is given by
\begin{equation*}
u_j = \mathcal{F}_{j}^{-d}( \hat{u}_k ) = \frac{1}{N} \sum_{k = 0}^{N -1} \hat{u}_k e^{ 2 i \pi k x_j}, 
\end{equation*}
for $j = 0, \cdots,N-1 $. Here $\mathcal{F}^d$ denotes the discrete
Fourier transform and $\mathcal{F}^{-d}$ its inverse.
\bigbreak

In what follows, the linear equation \eqref{nlocal} is solved using the discrete Fourier transform and time marching is performed exactly according to 
\begin{equation}
u_j^{n+1} = \F^{-d}_j \left(e^{ -\phi_\I(k)\Delta t } \F^{d}_k (u_j^n) \right).
\end{equation}
To approximate the viscous Burgers' equation \eqref{burgers}, we use the
following explicit centered scheme:
\begin{equation}\label{schemeburgers}
u_j^{n+1} = u_j^{n} - \frac{\Delta t}{2\Delta x} \left[
  \left(\frac{u^2}{2} \right)_{j+1}^{n} -  \left(\frac{u^2}{2}
  \right)_{j-1}^{n} \right] + \varepsilon \, \Delta t\frac{u_{j+1}^n-2
  u^n_j + u_{j-1}^n }{\Delta x^2}, 
\end{equation}
which is stable under the CFL-Peclet condition
\begin{equation}
\Delta t = \min \left(\frac{\Delta x}{|v|}, \frac{\Delta x^2}{2 \varepsilon} \right), 
\label{CFL}
\end{equation} 
where $v$ is an average value of $u$ in the neighbourhood of $(t^n , x_j $).

\begin{remark}
In the case where the linear sub-equation \eqref{nlocal} is solved using a finite difference scheme 
instead of a FFT computation, an additional stability condition is
required, see \cite{AB-p}. Moreover, the computation time becomes very
long because of the discretization of the nonlocal term which is
approximated using a quadrature rule. Indeed, in \cite{AB-p}, the
Fowler equation has been discretized using finite difference method
and the numerical analysis showed that this operation is
computationally expensive. This observation has also motivated the use
of splitting methods, in particular the implementation of split-step
Fourier methods.  
\end{remark}

In order to avoid numerical reflections due to boundaries conditions
and to justify the use of the FFT method, we consider initial data
with compact support displayed in Figure~\ref{cinitiale} to perform
numerical simulations. 

\begin{figure}[h!]
	\centering
	\includegraphics[scale=0.3]{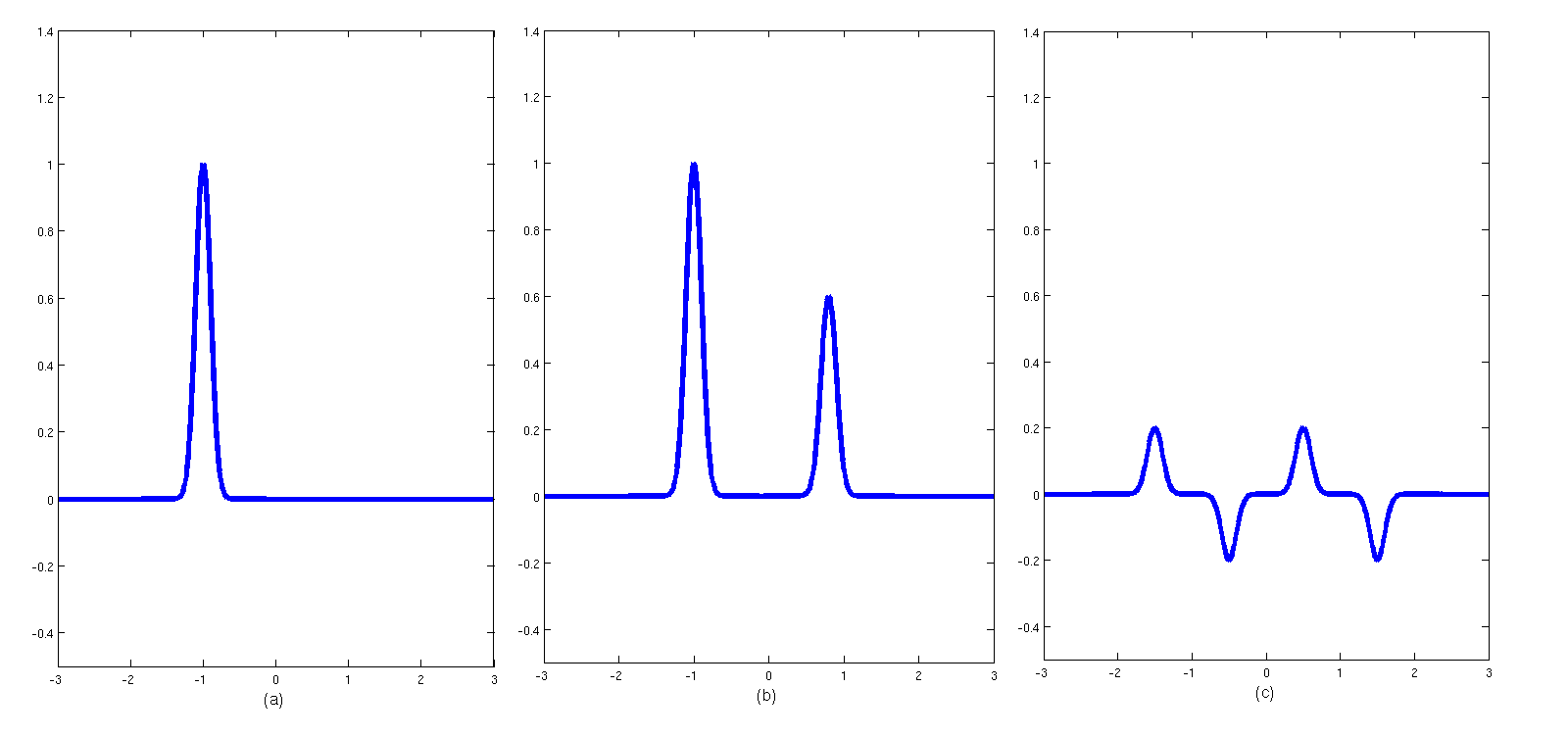} 
    \vspace{-1 cm} 	
	\caption{Initial data used for numerical experiments.}
	\label{cinitiale}	
\end{figure} 

Since we do not know the exact solution of the Fowler
  equation, a classical numerical way to determine the convergence
  numerical order of schemes is to plot the logarithm of the error
  $||u_1(T) -u_2(T)||_{L^2}$ in function of the logarithm of the step
  time $\Delta t$,   
where $u_1$ and  $u_2$ are computed for time steps
$\Delta t/2$ and $\Delta t/4$, respectively, up to the final time
$T$. Hence, the 
numerical order corresponds to the slope of the curve, see Figures
\ref{ordcvglie}, \ref{ordcvgstrang}. For reference, a small line of
slope one (resp. two) is added in Figure \ref{ordcvglie}
(resp. \ref{ordcvgstrang}). We see that the slopes for the three
initial data match well and so we can conclude that numerical
simulations are consistent with the theoretical results established
above. 
 \\

\begin{figure}[ht!]
	\centering
	\includegraphics[scale=0.3]{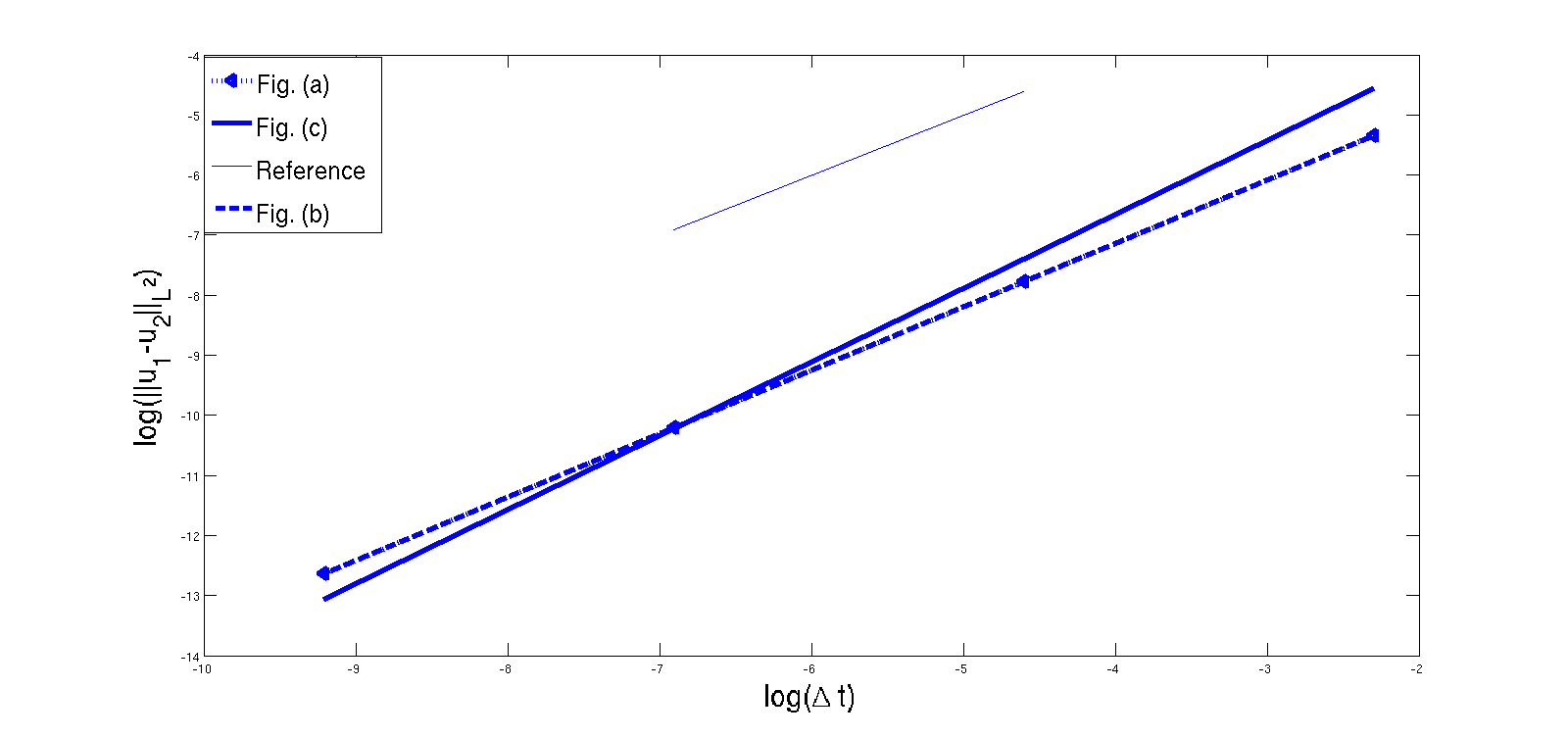} 
   \vspace{-0.8 cm} 	
	\caption{Lie method}
	\label{ordcvglie}	
\end{figure} 

We also study numerical convergence of Strang splittings using initial data displayed in Figure \ref{cinitiale}. Results are plotted in Figure \ref{ordcvgstrang}. 
We can see that the Strang formulation is of order two in time for smooth initial data. 

\begin{figure}[ht!]
	\centering
	\includegraphics[scale=0.3]{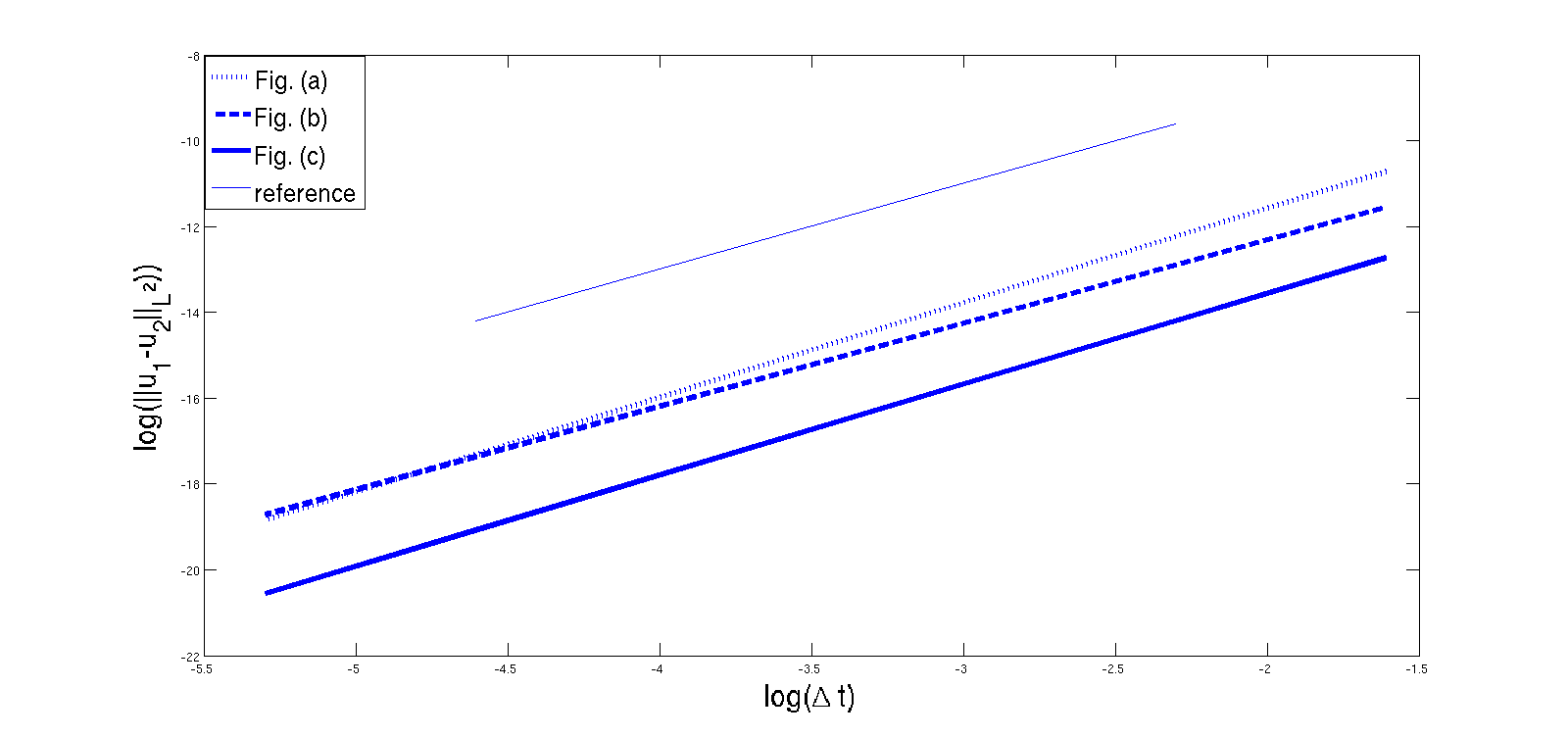} 
    \vspace{-0.8 cm} 	
	\caption{Strang method}
	\label{ordcvgstrang}	
\end{figure} 
From numerical simulations, we emphasize the fact that both formulas defining a Lie operator, as well as both
formulas defining a Strang operator, lead to the same results.



\subsection*{Acknowledgements} The authors are grateful to
Pascal Azerad and Bijan Mohammadi for helpful comments.

\end{document}